\documentclass[11pt]{article}
\usepackage[T1]{fontenc}
\usepackage{lmodern}
\usepackage[margin=1.08in]{geometry}
\usepackage{amsmath,amssymb,amsthm,mathtools}
\usepackage{microtype}
\usepackage[hidelinks]{hyperref}
\ifdefined\pdfinfoomitdate\pdfinfoomitdate=1\fi
\ifdefined\pdfsuppressptexinfo\pdfsuppressptexinfo=15\fi
\ifdefined\pdftrailerid\pdftrailerid{}\fi
\hypersetup{pdftitle={Symmetric-determinant apolarity in odd characteristic},
  pdfauthor={Ying Xie},pdfsubject={Commutative algebra},
  pdfkeywords={apolarity, symmetric determinant, positive characteristic, Lefschetz properties},
  pdfcreator={},pdfproducer={},pdfstartview={}}
\newtheorem{theorem}{Theorem}[section]
\newtheorem{proposition}[theorem]{Proposition}
\newtheorem{lemma}[theorem]{Lemma}
\newtheorem{corollary}[theorem]{Corollary}
\theoremstyle{definition}

\newtheorem{example}[theorem]{Example}
\theoremstyle{remark}

\numberwithin{equation}{section}
\DeclareMathOperator{\Ann}{Ann}
\DeclareMathOperator{\Sym}{Sym}
\DeclareMathOperator{\Hom}{Hom}
\DeclareMathOperator{\End}{End}

\DeclareMathOperator{\rank}{rank}
\DeclareMathOperator{\tr}{tr}
\DeclareMathOperator{\SL}{SL}
\DeclareMathOperator{\GL}{GL}
\newcommand{\kk}{K}
\newcommand{\Cat}{\mathrm{Cat}}
\newcommand{\one}{\mathbf{1}}
\newcommand{\set}[1]{\{#1\}}
\newcommand{\eps}{\varepsilon}
\title{Symmetric-determinant apolarity\protect\\ in odd characteristic}
\author{Ying Xie\\[3pt]
  \small Kennesaw State University\\
  \small\texttt{yxie2@kennesaw.edu}}
\date{}

\begin{document}
\maketitle

\begin{abstract}
Let $D_N$ be the determinant of the generic symmetric $N\times N$ matrix,
and let differential operators act by ordinary differentiation over a field
of odd characteristic $p$. We determine the apolar ideal of $D_N$ in every
size. In addition to the classical quadratic relations, it is generated by
one $(p-1)\times(p-1)$ determinant of differential variables for each set of
$2p-2$ indices. These additional generators are minimal modulo the
quadratic ideal. An exterior-algebra realization identifies the remaining
apolar relations with radicals of invariant pairings for $\SL_2$.
Semisimplification gives a Hilbert series in terms of Dyck paths of height
at most $p-2$, and the Steinberg symmetrizer gives the explicit generators
of the entire radical. We also compute the ranks of powers of a general
linear form. The weak Lefschetz property holds exactly when $N\leq 2p-2$,
and the strong Lefschetz property holds exactly when $N<p$.
\end{abstract}

\section{Introduction and main results}

Let $\kk$ be a field and let $X=(x_{ij})_{1\leq i,j\leq N}$ be the
generic symmetric matrix, where $x_{ji}=x_{ij}$ and the variables
$x_{ij}$ with $i\leq j$ are algebraically independent. Set
\[
 R=\kk[x_{ij}:1\leq i\leq j\leq N],\qquad
 S=\kk[d_{ij}:1\leq i\leq j\leq N],\qquad D_N=\det X.
\]
Throughout, $d_{ij}$ acts on $R$ as the ordinary partial derivative
$\partial/\partial x_{ij}$, and $d_{ji}$ means $d_{ij}$. The apolar algebra is
\[
 A_N=S/\Ann(D_N),\qquad
 \Ann(D_N)=\set{f\in S:f(\partial)D_N=0}.
\]
The grading on $S$ assigns degree one to every $d_{ij}$. We assume $N\geq1$.

In characteristic zero, Shafiei determined the apolar ideal of the
symmetric determinant and proved that it is generated by explicit
quadrics \cite[Theorem~3.10]{Shafiei}. Ordinary differentiation in positive
characteristic introduces a further issue: the top multiplication pairing
on the quadratic quotient can become degenerate. The first degeneracy in
characteristic $p>2$ occurs at $N=2p-2$. We describe this degeneracy in all
degrees and show that its first occurrence generates all subsequent
relations.

Write $Q_N\subset S$ for the ideal generated by
\begin{align}
 &d_{ii}^{\,2},\quad d_{ii}d_{ij}\quad(i\ne j),\notag\\
 &d_{ij}^{\,2}+2d_{ii}d_{jj}\quad(i\ne j),\notag\\
 &d_{ij}d_{ik}+d_{ii}d_{jk}\quad(i,j,k\text{ distinct}),\notag\\
 &d_{ij}d_{kl}+d_{ik}d_{jl}+d_{il}d_{jk}
       \quad(i,j,k,l\text{ distinct}).\label{eq:quadrics}
\end{align}
Suppose $\operatorname{char}\kk=p>2$ and put $m=p-1$. For a subset
$U=\{u_1<\cdots<u_{2m}\}\subseteq[N]$, define
\begin{equation}\label{eq:generator}
 F_U=\det\bigl(d_{u_i,u_{m+j}}\bigr)_{1\leq i,j\leq m}.
\end{equation}
Thus the row and column index sets are disjoint and together form $U$.

\begin{theorem}[Apolar ideal]\label{thm:ideal}
Over a field of odd characteristic $p$,
\begin{equation}\label{eq:ideal}
 \Ann(D_N)=Q_N+(F_U:U\subseteq[N],\ |U|=2p-2).
\end{equation}
The classes of the displayed $F_U$ form a basis of
$(\Ann(D_N)/Q_N)_{p-1}$ and minimally generate $\Ann(D_N)/Q_N$ as an
$S$-module. In particular, there are exactly $\binom{N}{2p-2}$ additional
generators modulo $Q_N$. If $p\geq5$, the apolar ideal is generated by
quadrics if and only if $N<2p-2$; its only possible minimal generator
degrees are $2$ and $p-1$. In characteristic $3$, the entire apolar ideal
is generated by quadrics. In characteristic zero, $\Ann(D_N)=Q_N$.
\end{theorem}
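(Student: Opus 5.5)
The plan is to compare $\Ann(D_N)$ with the radical of the multiplication pairing on the quadratic quotient $S/Q_N$. First I would check directly that $Q_N\subseteq\Ann(D_N)$ over $\mathbb Z$, hence in every characteristic. Each quadric in \eqref{eq:quadrics} applied to $D_N$ gives a signed sum of $(N-2)$-minors of $X$. Because $X$ is symmetric, these minors cancel in pairs or triples: this is the Laplace/Jacobi identity for second derivatives of $\det X$, with the factor $2$ in $d_{ij}^2+2d_{ii}d_{jj}$ coming from $\partial D_N/\partial x_{ij}=2\cdot(\text{cofactor})$ for $i\neq j$. The surjection $S/Q_N\to A_N$ then shows that $\Ann(D_N)/Q_N$ is exactly the radical of the bilinear pairing
\[
 (S/Q_N)_k\times(S/Q_N)_{N-k}\to\kk,\qquad (f,g)\mapsto (fg)(\partial)D_N .
\]
In characteristic zero Shafiei's theorem says this pairing is perfect, which gives $\Ann(D_N)=Q_N$. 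The whole task is therefore to compute this radical in characteristic $p$.

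Next I would give an integral model of $S/Q_N$. Using the relations, every monomial reduces to a combination of monomials $d_{i_1j_1}\cdots d_{i_kj_k}$ with all $2k$ indices distinct, or with some diagonal factors, on disjoint supports. I would realize $S/Q_N\otimes\mathbb Z$ inside an exterior-type algebra $\bigotimes_{i=1}^N E_i$, where each index $i$ carries a copy of the $\SL_2$-module structure. Under this realization, the pairing into the top degree becomes an $\SL_2$-invariant pairing between weight spaces of tensor products of small representations. I expect this to be the exterior-algebra realization announced in the abstract.

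Over $\mathbb Z$ the Gram matrices should then be computable blockwise by isotypic type. Over $\mathbb F_p$ the radical becomes the non-semisimple part: the block indexed by highest weight $p-1$ and beyond degenerates, i.e.\ the Steinberg/tilting phenomenon. Semisimplification then leaves only the Dyck paths of height $\le p-2$. The first such block appears in degree $m=p-1$ on $2m$ indices, and I would show its radical is one-dimensional for each $U$ and spanned by $F_U$. Concretely, I expect $F_U(\partial)D_N$ to equal $\pm\,p!$ times a sum of complementary minors, hence zero. The row and column sets are disjoint, so the expansion is a sum over perfect matchings of $U$ into row/column pairs, and these collapse to a multiple of $(m+1)!$.

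The main obstacle is to show that the radical in every degree $\ge p-1$ is generated as an $S$-module by these degree-$(p-1)$ pieces. I would attack it with the Steinberg symmetrizer. It gives an explicit projector whose image is the radical, and I would write each radical element as the symmetrizer applied to a product of a monomial with a $V(p-1)$-component supported on some $2m$-subset, i.e.\ as an element of $S\cdot F_U$. Minimality is then formal. $Q_N$ has no degree-$(p-1)$ elements beyond those in $S_{p-3}\cdot(\text{quadrics})$, and the $F_U$ have disjoint index supports, so they are linearly independent modulo $Q_N$. This gives $\dim(\Ann/Q_N)_{p-1}=\binom{N}{2p-2}$, and since $(\Ann/Q_N)_{<p-1}=0$ they are minimal generators.

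For the degree statements: when $p=3$ we have $p-1=2$, so the $F_U$ are themselves quadrics. When $p\ge5$ and $N\ge2p-2$, some $F_U$ is a nonzero minimal generator of degree $p-1>2$. When $N<2p-2$ there are no $F_U$, so $\Ann(D_N)=Q_N$.
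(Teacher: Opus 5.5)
Your outline has the same architecture as the paper (exterior model, $\SL_2$-invariants, semisimplification, Steinberg symmetrizer). However, the step that actually carries \eqref{eq:ideal} is not argued, and the mechanism you describe for it is wrong. That step is showing that the radical in every degree lies in the ideal generated by the $F_U$. The symmetrizer $e_{p-1}$ projects $V^{\otimes(p-1)}$ onto $\Sym^{p-1}V=T_{p-1}$. It is not a projector whose image is the radical of the invariant pairing on $W(E)=(V^{\otimes 2s})^{\SL_2}$, and your sketch never explains why an arbitrary radical element should pass through it. Five steps are missing.
First, the radical of the composition pairing $\Hom(\kk,V^{\otimes 2s})\times\Hom(V^{\otimes 2s},\kk)\to\kk$ is exactly the space of negligible morphisms $\kk\to V^{\otimes 2s}$.
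Second, negligible morphisms between tilting modules factor through sums of $T_r$ with $r\geq p-1$. Each such $T_r$ is a summand of $T_{p-1}\otimes V^{\otimes(r-p+1)}$, which is tilting with one-dimensional weight-$r$ space. Hence every radical element has the form $g\circ(e_{p-1}\otimes1)\circ h$ with $g,h$ equivariant.
Third, the characteristic-free first fundamental theorem writes $g$ and $h$ as pairing diagrams.
Fourth, the bent $e_{p-1}$ is a nonzero multiple of the balanced-word tensor $\Omega_{p-1}$, because $\binom{p-1}{k}\equiv(-1)^k$. Since $\Omega_{p-1}$ is symmetric and $\eps$ is alternating, every diagram joining two of its legs vanishes, so only terms $\Omega_Jz_M$ survive.
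Fifth, $\phi(F_U)$ is a nonzero multiple of $\Omega_U$, which converts $u_T\Omega_Jz_M$ into the image of an element of $S\cdot F_J$.
The fourth step also gives $F_U\in\Ann(D_N)$ immediately, by pairing $\Omega_U$ against $u_{[N]\setminus U}z_M$. So your unproved guess that $F_U(\partial)D_N$ is $\pm p!$ times a sum of minors is not needed. Without the second through fourth steps, though, the generation claim has no support.

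Several smaller points also need repair. In characteristic $p$ a nonzero polynomial can be annihilated by every operator of its degree (for instance $x^p$). So identifying $\Ann(D_N)/Q_N$ with the radical of $(f,g)\mapsto(fg)(\partial)D_N$ requires noting that every derivative of $D_N$ has all exponents at most $2<p$, which makes the relevant factorials units. The isotypic splitting you invoke is exactly what fails modulo $p$ in the relevant range, so computing Gram matrices blockwise over $\mathbb Z$ cannot produce the mod-$p$ ranks. What does produce them is the tilting semisimplification, which gives $\dim U(E)=b_s(p)$. From this follow $\mathcal R(E)=0$ for $|E|<2p-2$ and $\dim\mathcal R(E)=1$ for $|E|=2p-2$; you use both facts but only assert them. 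Finally, the $F_U$ do not have disjoint supports. Their independence modulo $Q_N$ comes from their pairwise distinct vertex multidegrees $\one_U$ together with $F_U\notin Q_N$. The latter requires the injectivity half of the normal form, $\ker\phi=Q_N$ (for example via the distinct leading Dyck words of noncrossing matchings), not only the spanning reduction you describe.
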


A Dyck path of length $2s$ is a walk from $0$ to $0$ with steps $+1$ and
$-1$ which never goes below $0$. Let $b_s(p)$ denote the number of these
paths whose height is at most $p-2$, and put $b_0(p)=1$. We use
$\binom{a}{b}=0$ when $b<0$ or $b>a$, for $a\geq0$.

\begin{theorem}[Hilbert series]\label{thm:hilbert}
Over a field of odd characteristic $p$, the Hilbert series of $A_N$ is
\begin{equation}\label{eq:hilbert}
 H_{A_N}(t)=\sum_{s=0}^{\lfloor N/2\rfloor}
       \binom{N}{2s}b_s(p)t^s(1+t)^{N-2s}.
\end{equation}
If $P$ is the adjacency matrix of the path with vertices $0,1,\ldots,p-2$,
then
\begin{equation}\label{eq:length}
 \dim_\kk A_N=((2I+P)^N)_{0,0}
 =\sum_{j\in\mathbb Z}
  \left[\binom{2N}{N+2pj}-\binom{2N}{N+2pj+2}\right].
\end{equation}
The sum is finite under the binomial convention above. In characteristic
zero, the same Hilbert-series formula holds with
$b_s(0)=\Cat_s=\frac{1}{s+1}\binom{2s}{s}$; in that case
\begin{equation}\label{eq:charzero}
 \dim_\kk(A_N)_k=\frac{1}{N+1}\binom{N+1}{k}\binom{N+1}{k+1},
 \qquad \dim_\kk A_N=\Cat_{N+1}.
\end{equation}
\end{theorem}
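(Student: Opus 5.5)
Since $A_N\cong S/\Ann(D_N)$ is isomorphic as a graded vector space to the space of all derivatives $\{f(\partial)D_N:f\in S\}$, I plan to compute $\dim_\kk(A_N)_k$ as the dimension of the span of all order-$k$ partials of $D_N$. The first step is to describe these partials concretely. A monomial $d_{i_1j_1}\cdots d_{i_kj_k}$ applied to $D_N$ is, up to a scalar, a signed sum of complementary minors of $X$. Here the scalar involves powers of $2$ for off-diagonal variables and $k!$-type multiplicities. Modulo the quadrics of $Q_N$ from \eqref{eq:quadrics}, every degree-$k$ class reduces to a combination of \emph{matching monomials}. These are products of $d_{ij}$ whose index multiset has no repeats, so each monomial is determined by a set $T\subseteq[N]$ of $|T|=k+s$ indices, of which $2s$ are paired by off-diagonal variables and $k-s$ occur as diagonal singletons.

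The key step is to identify, for fixed $T$ and $s$, the span of the order-$k$ derivatives with the space of pairings of $2s$ points modulo relations. The quadrics $d_{ij}d_{kl}+d_{ik}d_{jl}+d_{il}d_{jk}$ and $d_{ij}^2+2d_{ii}d_{jj}$ are exactly the Plücker/Kempe-type relations for $\SL_2$-invariants of $V^{\otimes 2s}$, where $V$ is the standard $2$-dimensional representation. The span of perfect matchings modulo these relations has dimension $\Cat_s$ (noncrossing matchings). I would use the exterior-algebra realization mentioned in the abstract to make this precise. That realization writes $D_N$-derivatives as contractions of invariant pairings and identifies the relevant space with the image of the natural pairing between $(V^{\otimes 2s})^{\SL_2}$ and its dual. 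In characteristic zero this pairing is perfect. In characteristic $p$, its rank equals the number of indecomposable summands of $V^{\otimes 2s}$ contributing a trivial composition factor of the projective-free (semisimplified tilting) part. By the standard tilting/Verlinde truncation this rank is the number of Dyck paths of height $\le p-2$, namely $b_s(p)$. Summing over the $\binom{N}{2s}$ choices of paired indices and the $\binom{N-2s}{k-s}$ choices of diagonal singletons gives the coefficient of $t^k$ in \eqref{eq:hilbert}.

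For \eqref{eq:length}, setting $t=1$ gives $\sum_s\binom{N}{2s}b_s(p)2^{N-2s}$. I would interpret this as a count of walks of length $N$ on the path $0,\dots,p-2$ starting and ending at $0$, where each step either stays put (weight $2$) or moves by $\pm1$. That count is $((2I+P)^N)_{0,0}$. Since $2I+P$ restricted to walks is the square-root walk, a walk of length $N$ with lazy weights $2$ corresponds to a simple walk of length $2N$ on the doubled lattice, with height bound $2p-2$ after reflection. The reflection principle for the strip then yields the alternating binomial sum. In characteristic zero the same argument gives $b_s=\Cat_s$. The Narayana formula \eqref{eq:charzero} follows from the identity $\sum_s\binom{N}{2s}\binom{N-2s}{k-s}\Cat_s=N(N+1,k+1)$ (Touchard-type), and $\sum_k$ gives $\Cat_{N+1}$.

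The main obstacle is the middle step: showing that the rank of the derivative span equals the rank of the $\SL_2$ invariant pairing, and computing that rank as $b_s(p)$ in characteristic $p$. I would first handle the pairing on $V^{\otimes 2s}$ via Temperley–Lieb theory at $\delta=-2$, which is $q$ a root of unity of order tied to $p$. I would then use semisimplification of the tilting category: the nondegenerate quotient of the invariant pairing has dimension equal to the multiplicity of the trivial object in the semisimplified $V^{\otimes 2s}$, counted by paths of height $\le p-2$ in the Verlinde fusion graph.
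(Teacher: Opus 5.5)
Your outline follows the paper's route. You pass to multidegree pieces $u_TW(E)$ spanned by matchings, identify each piece of $A_N$ with the nondegenerate quotient of the $\SL_2$-invariant pairing on $W(E)$, compute its rank by semisimplifying tilting modules, and sum over $E$ and $T$. Your length and characteristic-zero steps also match. Your doubled-lattice reflection, with absorbing barriers at $-2$ and $2p-2$, is equivalent to the paper's lazy-walk reflection at $-1$ and $p-1$. The Narayana identity you cite is what the paper checks via Vandermonde.

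The genuine gap is the step you yourself flag: nothing in the proposal actually connects derivatives of $D_N$ to the invariant pairing. The paper supplies this with the finite product $\mathcal E(X)=\prod_i(1+x_{ii}u_i)\prod_{i<j}(1+x_{ij}z_{ij}+\tfrac12x_{ij}^2z_{ij}^2)$ in $R\otimes\Lambda$. Its $u_{[N]}$-coefficient is $D_N$. The relations $u_i^2=0$ and $z_{ij}^3=0$ give $\partial_{ij}\mathcal E=\phi(d_{ij})\mathcal E$, hence $f(\partial)D_N=\tau(\phi(f)\mathcal E(X))$. Now expand $\mathcal E$ as $\sum_\beta x^\beta\phi(d^\beta)/\beta!$, where each $\beta!$ is a power of $2$. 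This shows that $f\in\Ann(D_N)$ if and only if $\phi(f)$ lies in the radical of $(c,c')\mapsto\tau(cc')$ on $\phi(S)$. You still have to check that the complementary pieces $u_TW(E)$ and $u_{[N]\setminus(E\cup T)}W(E)$ pair by $\pm\eps^{\otimes2s}$ on $W(E)$. This identity is also what disposes of your ``$k!$-type multiplicities'': every variable occurs in $D_N$ with exponent at most $2$, so only $2$ is ever inverted. A genuine factorial $k!$ with $k\geq p$ would break the identification in characteristic $p$.

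Second, your description of the rank is wrong as literally stated. It is not the number of indecomposable summands carrying a trivial composition factor. For $p=3$ one has $V^{\otimes4}\cong T_4\oplus T_2^{\oplus3}\oplus T_0$, and $T_4$ has $\kk$ as a composition factor yet contributes nothing. That count would give $2=\Cat_2$ instead of $b_2(3)=1$.

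The correct statement is that the radical of the composition pairing $\Hom(\kk,M)\times\Hom(M,\kk)\to\kk$ consists of the negligible morphisms. So the rank is the multiplicity of $T_0=\kk$ as a direct summand of $M=V^{\otimes2s}$, i.e.\ of the unit in the semisimplification. Clebsch--Gordan truncation then gives $(P^{2s})_{0,0}=b_s(p)$.

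To apply this to the pairing on the span of matchings, you also need $W(E)=(V^{\otimes2s})^{\SL_2}$. This follows from the characteristic-free first fundamental theorem. Alternatively, use the independence of the $\Cat_s$ noncrossing matchings together with $\dim\Hom(\kk,V^{\otimes2s})=\Cat_s$ from the Weyl filtration.

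The Temperley--Lieb ``root of unity'' framing is unnecessary and imprecise: with $\delta=\pm2$ in characteristic $p$ one has $q=\pm1$. It is the negligible-morphism argument that proves $\dim U(E)=b_s(p)$.
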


For a standard graded Artinian algebra $A$, a linear form $L\in A_1$ is
weak Lefschetz if multiplication $A_k\xrightarrow{L}A_{k+1}$ has maximal
rank for every $k$. It is strong Lefschetz if
$A_k\xrightarrow{L^q}A_{k+q}$ has maximal rank for every $k,q\geq0$.
The corresponding properties mean that such a form exists over the
ground field.

\begin{theorem}[Lefschetz properties]\label{thm:lefschetz}
Over any field of odd characteristic $p$, the algebra $A_N$ has the weak
Lefschetz property exactly when $N\leq2p-2$, and it has the strong
Lefschetz property exactly when $N<p$. Whenever either property holds,
it is realized by
\[
 \ell=d_{11}+\cdots+d_{NN}.
\]
At $N=2p-1$, the map
$\ell:(A_N)_{p-1}\longrightarrow(A_N)_p$ has a one-dimensional kernel.
In characteristic zero, $\ell$ is strong Lefschetz for every $N$.
\end{theorem}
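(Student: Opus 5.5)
We would prove Theorem~\ref{thm:lefschetz} using the structure behind Theorem~\ref{thm:hilbert}, namely the exterior-algebra/$\SL_2$ realization announced in the abstract. The key intermediate statement is that $A_N$ carries an $\mathfrak{sl}_2$-type action in which $\ell=\sum d_{ii}$ plays the role of the raising operator. Concretely, $A_N$ is identified with $\bigoplus_s \Lambda^{N}$-type pieces. Each summand with index $s$ in \eqref{eq:hilbert} contributes $\binom{N}{2s}b_s(p)$ copies of a graded module $t^s(1+t)^{N-2s}$. On this module, multiplication by $\ell$ acts as the tensor power $V^{\otimes(N-2s)}$ of the two-dimensional representation, with $\ell$ corresponding to $e$.

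\textbf{Step 1: reduction to tensor powers of the standard representation.} We write $\ell$ acting on $(1+t)^{N-2s}$ as $e\otimes1\otimes\cdots+\cdots$ on $V^{\otimes n}$, where $n=N-2s$ and $V=\kk^2$. In characteristic zero, $V^{\otimes n}$ is a sum of irreducibles centered at degree $n/2$. Every block is therefore symmetric about the common center $N/2$, since the shift $s$ compensates the length $n$. Hence $\ell$ is strong Lefschetz, which gives the final sentence of the theorem.

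\textbf{Step 2: odd characteristic.} The module $(1+t)^n$ with $e$ acting is isomorphic to the exterior algebra $\Lambda(\kk^n)$ with multiplication by $y_1+\cdots+y_n$. It is also isomorphic to $\kk[y_1,\dots,y_n]/(y_i^2)$ with multiplication by $\sum y_i$. Multiplication by $L^q$ maps degree $k$ to degree $k+q$ with matrix entries $q!$ times inclusion incidences. For $N<p$ every $n<p$, so $q!\neq0$ for all relevant $q$, and the characteristic-zero full-rank statement (Gotzmann/Stanley) transfers. This gives SLP. Conversely, for $N\ge p$ we take the summand $s=0$, $n=N$. There $L^p=p!\cdot(\ldots)=0$, while the degree-$0$ piece maps to degree $p\le N$, which is nonzero. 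We also need to argue that no other linear form works. We would show that $L^p$ is always zero on the $s=0$ block, via the Frobenius: $L^p$ is a sum of $p$-th powers, and $d_{ij}^p$ annihilates $D_N$ because $D_N$ has degree $\le1$ in $x_{ii}$ and degree $\le 2$ in $x_{ij}$. So $L^p\in\Ann(D_N)$ for every linear form $L$, while the unique nonzero socle and the symmetry of the Hilbert function force $(A_N)_0\to(A_N)_p$ to need to be injective when $p\le N$. This kills SLP.

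\textbf{Step 3: WLP threshold.} For the weak property only the single map $\ell$ matters. We would check maximal rank of $\ell:(A_N)_k\to(A_N)_{k+1}$ blockwise, using the truncation by $b_s(p)$: the blocks with $b_s(p)\ne0$ in the semisimplification are modular $V^{\otimes n}$ pieces. The hard part is that the Hilbert series is only a semisimplification statement. We therefore need the actual $\ell$-module structure on $A_N$, and we would get it from the Steinberg-symmetrizer description of the radical. For $N\le 2p-2$ we expect the quotient to coincide with the characteristic-zero-like unimodal pieces. In that case injectivity below the middle and surjectivity above can be checked by the pairing on $(A_N)_k\times(A_N)_{N-k}$ together with the symmetry of $\ell$. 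At $N=2p-1$ we exhibit the kernel in degree $p-1$ explicitly: it is the image of the newly nonvanishing $F_U$-type element (a $(p-1)$-minor on $2p-2$ of the indices) multiplied appropriately. We compare dimensions $\dim(A_N)_{p-1}=\dim(A_N)_p$ from \eqref{eq:hilbert} to show that the kernel is one-dimensional. Larger $N$ inherit failure by restricting to a principal $(2p-1)$-subset, using that $A_{N'}$ is a graded quotient-compatible piece of $A_N$. Finally, for an arbitrary linear form we again use the $p$-th power/Frobenius argument to show that $\ell$ is optimal. The main obstacle is this last point, namely ruling out all linear forms and not just $\ell$. We would attempt it by a semicontinuity argument over the open set of forms, reduced via the $\GL_N$-action $X\mapsto gXg^T$ to diagonal-type forms.
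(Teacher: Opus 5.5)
Your framework is right: as a $\kk[\ell]$-module, $A_N$ is a sum of $\binom{N}{2s}b_s(p)$ shifted copies of $B_{N-2s}=\kk[y_1,\dots,y_{N-2s}]/(y_i^2)$, with $\ell$ acting as $\sum y_i$. Your Frobenius argument for the failure of strong Lefschetz when $N\ge p$ is fine, and so is the $\mathfrak{sl}_2$-string argument in characteristic zero. But the weak Lefschetz threshold is not proved. For $N\le 2p-2$ you only say you ``expect'' characteristic-zero-like behaviour and appeal to the pairing and the symmetry of $\ell$. Duality only turns injectivity below the middle into surjectivity above it; it establishes neither.

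The missing ingredient is a characteristic-$p$ rank computation on each block. Multiplication by $\sum y_i$ from $(B_M)_j$ to $(B_M)_{j+1}$ is the inclusion matrix of $j$-subsets into $(j+1)$-subsets. For $j<M/2$, Wilson's theorem gives its rank mod $p$ as the sum of $\binom{M}{i}-\binom{M}{i-1}$ over $0\le i\le j$ with $p\nmid j+1-i$. This telescopes to $\binom{M}{j}$ when $j+1<p$. For $M\le 2p-2$ and $j<M/2$ one has $j+1\le p-1$, and since all blocks share the center $N/2$, the ranks add up to maximal rank.

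Your claim that the actual $\ell$-module structure needs the Steinberg description is a misdiagnosis. The map $\ell\mapsto\sum_i u_i$ only moves the diagonal set $T$ in $(A_N)_{\one_E+2\one_T}\simeq u_TU(E)$ and acts as the identity on $U(E)$. So the Boolean decomposition is immediate, and the radical enters only through $\dim U(E)=b_s(p)$. Similarly, for $N<p$, invertibility of $q!$ does not by itself carry full rank over from characteristic zero. You also need the inclusion matrices to have full rank mod $p$, which Wilson's formula gives because every binomial involved has top index below $p$.

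The $N=2p-1$ step fails as written. Each $F_U$ lies in $\Ann(D_N)$, so it and all its multiples are zero in $A_N$ and cannot span a kernel; the new relations play no role in the failure of weak Lefschetz. The kernel is spanned by $\ell^{p-1}=(p-1)!\sum_{|T|=p-1}u_T$ in the block $E=\varnothing$, which $\ell$ kills because $\ell^p=0$. Equality of $\dim(A_N)_{p-1}$ and $\dim(A_N)_p$ tells you nothing about the kernel dimension; you need the rank. Wilson's formula for $B_{2p-1}$ at $j=p-1$ drops only the $i=0$ term, and every other block has $M\le 2p-3$ and is bijective in that degree.

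For $N>2p-1$, the ``restriction to a principal $(2p-1)$-subset'' is unsupported, since you give no $\kk[\ell]$-module comparison between $A_{2p-1}$ and $A_N$; argue directly instead. In fact the step you call the main obstacle is easy. For any $L\in(A_N)_1$, $L^p=0$ by Frobenius, so the largest $k$ with $L^k\ne0$ satisfies $k\le p-1<N/2$. Then $L^k$ is a nonzero element killed by $L$, in a degree where symmetric unimodality forces maximal rank to mean injectivity. This rules out every linear form at once when $N\ge 2p-1$. The paper instead shows that $\ell$ realizes the generic ranks, using the open congruence orbit of nonsingular symmetric matrices.
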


The proof separates a characteristic-independent quadratic algebra from
the characteristic-dependent radical of its top pairing. The former is a
subalgebra of an exterior algebra. Its multigraded pieces are spaces of
invariant tensors for the standard representation of $\SL_2$. The rank of
the invariant pairing is determined by the semisimplification of tilting
modules. To obtain ideal generators, we use the Steinberg symmetrizer and
then translate its tensor-ideal property into multiplication in $S/Q_N$.
Finally, multiplication by $\ell$ decomposes into subset-inclusion maps,
whose ranks are given by Wilson's formula in the form proved by Frankl
\cite[Theorem~4.1]{Frankl}.

The ordinary differential convention is essential. The characteristic
assumption stated in \cite{Shafiei} includes all $p>2$, whereas the
quadratic presentation requires the additional relations in
\eqref{eq:ideal} in small characteristic. We use its characteristic-zero
presentation as the point of comparison. Section~\ref{sec:examples}
gives the first explicit discrepancy. No divided-power action is used
anywhere in this paper.

\section{The exterior model and its quadratic presentation}
\label{sec:exterior}

Assume for this section that $2$ is invertible in $\kk$. Let
\[
 \Lambda=\bigwedge\left(\bigoplus_{i=1}^N
                 (\kk a_i\oplus\kk b_i)\right),\qquad
 u_i=a_i b_i,\qquad z_{ij}=a_i b_j+a_j b_i\quad(i\ne j).
\]
All products in $\Lambda$ are exterior products; we omit wedge symbols.
The elements $u_i$ and $z_{ij}$ have even exterior degree and commute.
There is therefore an algebra homomorphism
\begin{equation}\label{eq:phi}
 \phi:S\longrightarrow\Lambda,\qquad
 d_{ii}\longmapsto u_i,\quad d_{ij}\longmapsto z_{ij} (i<j).
\end{equation}
Give $a_i,b_i$ vertex degree $e_i\in\mathbb Z^N$. On $S$ this means
$\deg_v d_{ii}=2e_i$ and $\deg_v d_{ij}=e_i+e_j$. All the relations
in \eqref{eq:quadrics} are homogeneous for this grading.

For a finite ordered set $E$ of even size $2s$, let $W(E)$ be the span of
the products
\[
 z_M=\prod_{\{i,j\}\in M}z_{ij},
\]
where $M$ runs through the perfect matchings of $E$. The space $W(E)$
lies in the exterior component of vertex degree one at each vertex of
$E$ and zero elsewhere. Write $u_T=\prod_{i\in T}u_i$.

\begin{proposition}[Quadratic normal form]\label{prop:normal}
The kernel of $\phi$ is $Q_N$. If $C_N=S/Q_N$, then its vertex-multigraded
pieces are
\begin{equation}\label{eq:pieces}
 (C_N)_\alpha=
 \begin{cases}
  u_TW(E),&\alpha=\one_E+2\one_T,
       \ E\cap T=\varnothing,\ |E|=2s,\\
  0,&\text{otherwise}.
 \end{cases}
\end{equation}
The products associated with noncrossing matchings of $E$ form a basis
of $W(E)$. Consequently $\dim_\kk W(E)=\Cat_s$, independently of the
odd characteristic.
\end{proposition}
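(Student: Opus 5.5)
We first check that $Q_N\subseteq\ker\phi$ and then show that $\dim (S/Q_N)_\alpha\le\dim\phi(S)_\alpha$ in each vertex degree $\alpha$. Together these give $\ker\phi=Q_N$ and the description \eqref{eq:pieces}.

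\emph{Step 1: $Q_N\subseteq\ker\phi$.} This is a direct computation in $\Lambda$. We have $u_i^2=0$ and $u_iz_{ij}=a_ib_i(a_ib_j+a_jb_i)=0$, since every term contains $a_i$ twice or $b_i$ twice. Next, $z_{ij}^2=2a_ib_ja_jb_i=-2u_iu_j$. For the three-index relation, expanding $z_{ij}z_{ik}$ leaves only $a_ib_ja_kb_i+a_jb_ia_ib_k$, and this equals $-u_iz_{jk}$. For four distinct indices, the sum $z_{ij}z_{kl}+z_{ik}z_{jl}+z_{il}z_{jk}$ vanishes because each monomial $a_\bullet a_\bullet b_\bullet b_\bullet$ occurs exactly twice with opposite signs. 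All of this only needs $2$ invertible, so $\phi$ descends to $C_N\to\Lambda$ and is vertex-graded.

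\emph{Step 2: spanning in $C_N$.} Take a monomial of vertex degree $\alpha$. Using $d_{ii}^2=0$ and $d_{ii}d_{ij}=0$, any vertex carrying a loop has coordinate exactly $2$ and no other edges. The relation $d_{ij}^2\equiv-2d_{ii}d_{jj}$ removes double edges. The relation $d_{ij}d_{ik}\equiv-d_{ii}d_{jk}$ converts any vertex of degree $2$ built from two distinct edges into a loop.

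Repeating these rewrites shows that every monomial is congruent to $\pm u_T$ times a perfect-matching monomial on $E$, with $\alpha=\one_E+2\one_T$, or to $0$. Rewriting terminates because each rewrite increases the number of loops. In particular any $\alpha$ with an entry $\ge3$ gives $0$.

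Finally, the Plücker-type four-term relation reduces crossing matchings to noncrossing ones. This is the standard uncrossing argument: a crossing pair $\{i,k\},\{j,l\}$ with $i<j<k<l$ is rewritten as minus the sum of the two noncrossing pairings. The total crossing number, or the sum of arc lengths, strictly decreases. Hence $(C_N)_\alpha$ is spanned by $u_T z_M$ with $M$ a noncrossing matching of $E$, giving at most $\Cat_s$ elements.

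\emph{Step 3: linear independence of $\phi$ of these elements.} This is the main obstacle, and it must hold in every characteristic $\neq2$. Since $u_T$ uses disjoint letters, multiplication by $u_T$ is injective on the relevant component, so we reduce to showing that $\{z_M\}$ over noncrossing $M$ is independent in $W(E)$.

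Expand $z_M$ in the basis of exterior monomials. Each vertex of $E$ contributes either $a_v$ or $b_v$, and each edge has one $a$-end and one $b$-end. So $z_M$ is a signed sum over orientations of $M$, and the coefficients are $\pm1$.

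I would choose, for each noncrossing $M$, the orientation in which the left endpoint of every arc gets $a$. The resulting $a/b$ word is exactly the Dyck word of $M$, and distinct noncrossing matchings give distinct Dyck words. I then need a triangularity statement: the Dyck word of $M$ appears in $z_{M'}$ only if $M'\preceq M$ in a suitable order, for instance reverse lexicographic order on the words. Indeed, any orientation of $M'$ gives a word $w$, and $w$ equals the Dyck word of $M$ only if $M$ is obtainable from $M'$ by a specific nesting comparison. An alternative is to use the fact that the noncrossing matching compatible with a given Dyck word is unique: pair each $b$ with the nearest unmatched $a$ to its left.

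The cleanest check is the following claim. The coefficient of the Dyck word $w_M$ in $z_{M'}$ is nonzero only when $M'$ is a matching pairing the $a$-positions of $w_M$ with its $b$-positions. Among noncrossing such matchings, only $M$ itself has every arc oriented left-to-right from $a$ to $b$ consistently with $w_M$, since a noncrossing matching of a fixed $a/b$ word with each arc joining an $a$ to a $b$ is unique. The coefficient in that case is $\pm1$, hence nonzero in any characteristic.

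This yields a unitriangular $\pm1$ matrix, and therefore independence over every field. Combining the three steps gives $\ker\phi=Q_N$, the piece formula \eqref{eq:pieces}, and $\dim W(E)=\Cat_s$.
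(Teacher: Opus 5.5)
Your route is the paper's: check the quadrics in $\Lambda$, reduce monomials to diagonal factors times matching products and straighten to noncrossing ones, then prove independence via Dyck words. The gap is in Step~3, which you yourself flag as the main obstacle.

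\textbf{The gap in Step~3.} You claim that a noncrossing matching joining the $a$-positions of a fixed word to its $b$-positions is unique. This is false. Take $M=\{\{1,2\},\{3,4\}\}$, so $w_M=abab$. The matching $M'=\{\{1,4\},\{2,3\}\}$ is also noncrossing and joins $a$'s to $b$'s. Indeed $abab$ occurs in $z_{14}z_{23}$, from the term $a_1b_4\,a_3b_2$, with coefficient $\pm1$.

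Uniqueness holds only if you also require every arc to carry $a$ at its left endpoint. But whether $w_M$ occurs in $z_{M'}$ does not depend on $M'$ being oriented that way. So your argument identifies only the diagonal entries. The off-diagonal entries need not vanish, and you never give an order in which they vanish on one side. Thus ``unitriangular'' is asserted, not proved. Your earlier remark about a ``suitable order'' and a ``nesting comparison'' points the right way but is not carried out.

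\textbf{The fix.} The missing step is short and is exactly the paper's leading-word argument. Order words lexicographically with $a>b$. For any matching $M'$, the word $w_{M'}$ with $a$ at every left endpoint is the largest word occurring in $z_{M'}$. Suppose another orientation first differs from $w_{M'}$ at position $x$. Then $x$ must be a left endpoint: a right endpoint's letter is determined by its partner further left, where the two words still agree. So the other word has $b$ at $x$ and is smaller. Distinct orientations give distinct monomials with coefficients $\pm1$, so nothing cancels inside $z_{M'}$. Hence $w_M$ occurs in $z_{M'}$ only if $w_M\leq w_{M'}$. Ordering the noncrossing matchings by their distinct Dyck words then gives a triangular matrix with $\pm1$ on the diagonal, invertible over every field.

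\textbf{Minor slips in Step~2.} The sum of arc lengths does \emph{not} strictly decrease under the nested resolution, since $(l-i)+(k-j)=(k-i)+(l-j)$. The total crossing number does strictly decrease under both resolutions, so use that measure. Also, the rewriting produces scalar multiples such as $-2$ from double edges, not only signs; this is harmless for spanning.
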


\begin{proof}
Exterior multiplication gives the relations in \eqref{eq:quadrics}.
For example,
\[
 z_{ij}^{\,2}=-2u_i u_j,\qquad
 z_{ij}z_{jk}=-u_jz_{ik},\qquad
 z_{ij}z_{kl}+z_{ik}z_{jl}+z_{il}z_{jk}=0.
\]
This proves $Q_N\subseteq\ker\phi$.

Represent a monomial in $S$ by a multigraph on $[N]$, with a loop at $i$
for each factor $d_{ii}$ and an edge $ij$ for each factor $d_{ij}$.
A loop contributes two to the vertex degree. A monomial with vertex
degree at least three at some vertex vanishes modulo $Q_N$. Indeed, a
loop together with any further incident factor is zero. A repeated
edge can be replaced by $-2d_{ii}d_{jj}$, reducing to that case. For
three distinct edges incident at $i$, replacing the first two by
$-d_{ii}d_{jk}$ again gives zero.

If every vertex degree is at most two, each nontrivial component is a
path, a cycle, or a loop. Repeated use of
$d_{ij}d_{jk}=-d_{jj}d_{ik}$ reduces a path to its endpoint edge times
the diagonal variables at its internal vertices, with a sign. A cycle
reduces to a signed factor $2$ times the product of the diagonal
variables at its vertices. It follows that the component of vertex
degree $\one_E+2\one_T$ is spanned by
$\bigl(\prod_{i\in T}d_{ii}\bigr)
 \bigl(\prod_{\{i,j\}\in M}d_{ij}\bigr)$,
with $M$ a matching on $E$.

The four-vertex relation straightens these matching products into
noncrossing ones. This is the usual bracket straightening: place the
ordered vertices on a line and resolve each crossing by the other two
pairings of its four ends. Drawing the resolved arcs before straightening
them removes a crossing, so iteration terminates. Equivalently, this is
the multilinear Pl\"ucker straightening for two-dimensional brackets.

For independence, order the exterior generators by vertex. In the
component with one generator at each vertex, identify a monomial with
its word in $a,b$. With $a>b$ in lexicographic order, the leading word
of $z_M$ has $a$ at each left endpoint and $b$ at each right endpoint,
with coefficient $1$ or $-1$. Noncrossing matchings have distinct leading
words: these are exactly the Dyck words, and the matching is recovered
by the usual stack pairing. Their images are therefore linearly
independent in every characteristic. Multiplication by $u_T$ on disjoint
vertices preserves independence. The spanning set just obtained in
$S/Q_N$ consequently maps to a basis of the image, proving the claim.
\end{proof}

Let $V=\kk a\oplus\kk b$ with the symplectic form
$\eps(a,b)=1$. Its symplectic group is $\SL_2$. Ordered exterior
multiplication identifies the component with one letter at each vertex
of $E$ with $V^{\otimes 2s}$. Under this identification $z_{ij}$ is the
alternating invariant tensor on positions $i,j$, with the signs supplied
by the order of the exterior factors. The characteristic-free first
fundamental theorem for symplectic invariants gives
\begin{equation}\label{eq:invariants}
 W(E)=(V^{\otimes 2s})^{\SL_2}.
\end{equation}
One may apply the multilinear part of
\cite[Theorem~17]{Hashimoto}; in dimension two its generators are the
brackets used above. We shall also use its consequence that equivariant
maps between tensor powers of $V$ are spanned by diagrams made from
symplectic evaluations, coevaluations, and permutations of factors.
This follows by identifying $V$ with $V^*$ and bending input factors
into output factors.

\section{The apolar pairing and its modular rank}
\label{sec:pairing}

Let $\tau:\Lambda\to\kk$ extract the coefficient of
$u_{[N]}=u_1\cdots u_N$. Consider the finite expression
\begin{equation}\label{eq:exponential}
 \mathcal E(X)=\prod_{i=1}^N(1+x_{ii}u_i)
       \prod_{1\leq i<j\leq N}
           \left(1+x_{ij}z_{ij}+\frac{x_{ij}^2z_{ij}^{\,2}}2\right).
\end{equation}
It is an element of $R\otimes_\kk\Lambda$. The quadratic denominators
are units under our characteristic assumption.

\begin{lemma}\label{lem:top}
The identities
\begin{equation}\label{eq:top}
 \tau(\mathcal E(X))=D_N,\qquad
 f(\partial)D_N=\tau\bigl(\phi(f)\mathcal E(X)\bigr)
       \quad(f\in S)
\end{equation}
hold over every field of characteristic different from two.
Consequently
\begin{equation}\label{eq:radical}
 \Ann(D_N)/Q_N
 =\set{c\in C_N:\tau(cc')=0\text{ for all }c'\in C_N}.
\end{equation}
\end{lemma}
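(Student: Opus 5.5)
We prove the first identity by expanding $\mathcal E(X)$ and reading off the coefficient of $u_{[N]}$. Each factor contributes a term of exterior degree $0$, $2$ or $4$. Choosing $x_{ii}u_i$ from the diagonal factor at $i$ corresponds to a fixed point $i$. Choosing $x_{ij}z_{ij}$ from an off-diagonal factor corresponds to an edge $ij$. Choosing $x_{ij}^2z_{ij}^2/2=-x_{ij}^2u_iu_j$ corresponds to a doubled edge, i.e.\ a $2$-cycle. By the proof of Proposition~\ref{prop:normal}, only selections in which every vertex has degree exactly two (a loop counting two) contribute to the top component. Such a selection is a disjoint union of loops, doubled edges, and cycles of length at least $3$ on $[N]$.

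We then evaluate each component.
\begin{itemize}
\item A cycle $C$ of length $r\geq3$ reduces, by repeated use of $z_{ij}z_{jk}=-u_jz_{ik}$ and $z_{ij}^2=-2u_iu_j$, to $\pm2\prod_{i\in C}u_i$. The factor $2$ corresponds to the two orientations of the cycle, and the sign should come out as $(-1)^{r-1}$.
\item A doubled edge gives $-x_{ij}^2$, i.e.\ sign $(-1)^{2-1}$ with multiplicity one.
\item A loop gives $x_{ii}$.
\end{itemize}
Comparing with the Leibniz expansion of $\det X$ for symmetric $X$ finishes the first identity. In that expansion one groups permutations by their underlying unoriented cycle structure. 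Each cycle of length $\geq3$ occurs in two orientations with the same monomial. Transpositions occur once with sign $-1$, and $r$-cycles carry sign $(-1)^{r-1}$. The only delicate point is the sign bookkeeping when the cycle reduction is carried out inside the ordered exterior algebra. I would check it first on a single $r$-cycle by induction on $r$, using $z_{ij}z_{jk}=-u_jz_{ik}$ to shorten the cycle by one vertex and tracking the resulting factor $-u_j$. Since even-degree elements commute, the components can be treated independently. No division other than by $2$ occurs, so the argument works whenever $\operatorname{char}\kk\neq2$.

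For the second identity it suffices, by linearity and multiplicativity, to show that $\partial/\partial x_{ij}$ applied to $\mathcal E(X)$ equals $\phi(d_{ij})\mathcal E(X)$. Then $f(\partial)\mathcal E=\phi(f)\mathcal E$, and applying the $R$-linear map $\tau$ gives the result. For the diagonal factors, $\partial_{x_{ii}}(1+x_{ii}u_i)=u_i=u_i(1+x_{ii}u_i)$, since $u_i^2=0$. For the off-diagonal factors, $\partial_{x_{ij}}(1+xz+x^2z^2/2)=z+xz^2=z(1+xz+x^2z^2/2)$, because $z_{ij}^3=0$: it has vertex degree $3$ at $i$ in an exterior algebra with only two generators there. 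Every other factor is independent of $x_{ij}$. Commutativity of even elements lets us pull the factor out.

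Finally, \eqref{eq:radical} follows. If $f\in S$, then $f(\partial)D_N=\tau(\phi(f)\mathcal E)$, and this is a polynomial whose coefficients are, up to nonzero scalars, the values $\tau(\phi(f)\phi(g))$ for monomials $g$. Indeed, expanding $\mathcal E=\sum_g c_g x^g\phi(g)$ with $c_g$ a product of $1$'s and $1/2$'s, each $c_g$ is a unit. Hence $f\in\Ann(D_N)$ if and only if $\tau(\phi(f)\phi(g))=0$ for all monomials $g$ with each $d_{ij}$ exponent at most $2$ and each $d_{ii}$ exponent at most $1$. Any other monomial $g$ has $\phi(g)=0$, so this is equivalent to $\tau(\phi(f)c')=0$ for all $c'$ in the image of $\phi$. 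By Proposition~\ref{prop:normal}, $\phi$ identifies $C_N$ with that image and $Q_N=\ker\phi\subseteq\Ann(D_N)$, which gives the description of $\Ann(D_N)/Q_N$ as the radical.

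I expect the sign verification in the cycle evaluation to be the only real obstacle; everything else is formal.
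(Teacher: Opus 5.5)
Your proof is correct. For the second identity and for \eqref{eq:radical} you argue as the paper does: differentiate each factor using $u_i^2=0$ and $z_{ij}^3=0$, expand the finite product with unit coefficients $1/\beta!$, use linear independence of the monomials $x^\beta$, and use $\ker\phi=Q_N$.

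The difference is in the first identity. The paper works over $\mathbb Q$ and rewrites $\mathcal E(X)$ as $\exp\bigl(\sum_{i,j}x_{ij}a_ib_j\bigr)$. Then $\tau$ sees only $\frac{1}{N!}\bigl(\sum_{i,j}x_{ij}a_ib_j\bigr)^N$, which equals the Leibniz sum because $a_1b_{\sigma(1)}\cdots a_Nb_{\sigma(N)}=\operatorname{sgn}(\sigma)u_{[N]}$. It then specializes from $\mathbb Z[1/2]$ to the field. This is short and conceptual (a Grassmann Gaussian integral), with no case analysis.

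You instead expand the finite product directly into $2$-regular selections and match them with permutations grouped by unoriented cycle type. Your route works over $\mathbb Z[1/2]$ from the start and needs neither the exponential nor the specialization step. It also shows where the constants go. For transpositions, the $\tfrac12$ in the quadratic term cancels the $2$ in $z_{ij}^2=-2u_iu_j$. For longer cycles, the factor $2$ in the cycle reduction matches the cycle's two orientations. The price is the cycle bookkeeping.

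The sign you flag as the main obstacle is in fact immediate. All factors are even and commute, so no ordering issue arises. Applying $z_{ij}z_{jk}=-u_jz_{ik}$ $r-2$ times and then $z_{ij}^2=-2u_iu_j$ gives $z_{c_1c_2}\cdots z_{c_rc_1}=(-1)^{r-2}(-2)u_C=2(-1)^{r-1}u_C$.

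Also, restricting to selections with vertex degree exactly two needs only the vertex multigrading of $\Lambda$, since the top component has vertex degree $2\one_{[N]}$. The reductions from the proof of Proposition~\ref{prop:normal} are not needed there.
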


\begin{proof}
The determinant identity can be checked over $\mathbb Q$ by writing
\[
 \mathcal E(X)=\exp\left(\sum_i x_{ii}a_i b_i
                   +\sum_{i<j}x_{ij}(a_i b_j+a_j b_i)\right).
\]
The coefficient of $u_{[N]}$ is the usual alternating permutation
expansion of $\det X$. The finite product \eqref{eq:exponential} and this
coefficient identity are defined over $\mathbb Z[1/2]$, so the identity
holds after specialization to every field under consideration. In
particular, no exponential series with noninvertible factorials is being
used in positive characteristic.

Direct differentiation of each factor in \eqref{eq:exponential} gives
$\partial_{ij}\mathcal E=\phi(d_{ij})\mathcal E$, since $u_i^2=0$ and
$z_{ij}^3=0$. Iteration proves the second identity in \eqref{eq:top}.

For the last assertion, let
\[
 B=\set{\beta:\ \beta_{ii}\in\{0,1\},\
                   \beta_{ij}\in\{0,1,2\}\text{ for }i<j},
 \qquad \beta!=\prod_{i\leq j}\beta_{ij}!.
\]
Expanding the finite product gives
\[
 \mathcal E(X)=\sum_{\beta\in B}
                  \frac{x^\beta}{\beta!}\phi(d^\beta),
 \qquad
 f(\partial)D_N=\sum_{\beta\in B}
     \frac{x^\beta}{\beta!}\tau\bigl(\phi(f)\phi(d^\beta)\bigr).
\]
Every $\beta!$ in these sums is a power of $2$ and hence is a unit.
Since the distinct monomials $x^\beta$ are linearly independent in
the polynomial ring $R$, the second sum vanishes if and only if
$\tau(\phi(f)\phi(d^\beta))=0$ for every $\beta\in B$. This is a
coefficientwise statement about formal polynomials, valid also over a
finite field. The elements $\phi(d^\beta)$ with $\beta\in B$ span $C_N$:
every operator monomial outside these exponent bounds has zero image,
by $u_i^2=0$ and $z_{ij}^3=0$. Thus $f(\partial)D_N=0$ if and only if
$\phi(f)$ pairs to zero with every element of $C_N$. Together with
$\ker\phi=Q_N$, this proves \eqref{eq:radical}.
\end{proof}

The radical in \eqref{eq:radical} is an ideal, by associativity of
multiplication. Only complementary vertex degrees can pair under $\tau$.
If $\alpha=\one_E+2\one_T$, its complement in $2\one_{[N]}$ is
$\one_E+2\one_{[N]\setminus(E\cup T)}$. To compute this pairing, order
the letters at the vertices of $E$ and move the two letters at each
vertex next to one another. This reordering contributes a fixed sign,
independent of the letters; the coefficient of $a_i b_i$ at each
vertex is the corresponding value of $\eps$. The induced form on
$W(E)$ is therefore, up to an overall sign, the restriction of
$\eps^{\otimes 2s}$.
Let $\mathcal R(E)$ denote its radical and put
\[
 U(E)=W(E)/\mathcal R(E).
\]
It follows that
\begin{equation}\label{eq:apolarpieces}
 (A_N)_{\one_E+2\one_T}\simeq u_TU(E).
\end{equation}
In particular, the quotient has a nondegenerate top pairing and
one-dimensional top degree $N$, generated by $u_{[N]}$. This also proves
directly that $A_N$ is Artinian Gorenstein in the ordinary differential
convention used here.

We now compute $\dim U(E)$. The representation-theoretic argument may
be made over an algebraic closure: the ranks of the matrices defining
all the spaces and pairings above are unchanged by extending scalars.

\begin{proposition}\label{prop:rank}
If $|E|=2s$ and $\operatorname{char}\kk=p>2$, then
\[
 \dim_\kk U(E)=b_s(p).
\]
In characteristic zero, the invariant pairing is nondegenerate and
$\dim_\kk U(E)=\Cat_s$.
\end{proposition}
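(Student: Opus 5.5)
We reduce the rank of the pairing on $W(E)=(V^{\otimes 2s})^{\SL_2}$ to a statement about tilting modules for $\SL_2$ and their semisimplification. By \eqref{eq:invariants}, $W(E)=\Hom_{\SL_2}(\kk,V^{\otimes 2s})$. The form induced by $\eps^{\otimes 2s}$ can be rewritten using the self-duality $V\simeq V^*$ given by $\eps$. It then becomes the composition pairing
\[
 \Hom(\kk,V^{\otimes 2s})\times\Hom(V^{\otimes 2s},\kk)\to\End(\kk)=\kk,
\]
up to a sign that does not affect the rank. So $\dim U(E)$ is the rank of this composition pairing. For a module $T$ this rank is the number of copies of the trivial module $\kk$ that split off $T$ as direct summands, i.e.\ the multiplicity of $\kk$ in $T$ modulo negligible morphisms. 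To prove this, decompose $T=V^{\otimes 2s}$ into indecomposable tilting modules $T(n)$; $V^{\otimes 2s}$ is tilting since $V=T(1)$ is. Maps $\kk\to T(n)\to\kk$ with $n>0$ compose to zero: such a composite being nonzero would make $\kk$ a direct summand of the indecomposable $T(n)\neq\kk$. The summands $T(0)=\kk$ contribute an identity block. Hence the rank equals the multiplicity of $T(0)$ in $V^{\otimes 2s}$.

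In characteristic zero every module is semisimple, so every invariant of $V^{\otimes 2s}$ lies in a trivial summand. The pairing is therefore nondegenerate, and $\dim U(E)=\dim W(E)=\Cat_s$ by Proposition~\ref{prop:normal}.

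For $p>2$ we compute the multiplicity of $T(0)$ in $V^{\otimes n}$ using Clebsch--Gordan rules for tilting modules in the alcove $0\le n\le p-1$, where $T(n)=L(n)$ is simple. For $n\le p-2$ we have $T(n)\otimes V\simeq T(n-1)\oplus T(n+1)$, since $L(n)\otimes V$ has a Weyl filtration with factors $\Delta(n\pm1)$ that are simple and linked to distinct blocks, so it splits. For $n=p-2$ the summand $T(p-1)$ is the Steinberg module. Its tensor products with $V$ contain only $T(n)$ with $n\ge p-1$, because the quantum dimension is $0$ and the negligible tilting modules form a tensor ideal. The ideal claim follows from the fact that $\dim T(n)\equiv 0 \pmod p$ exactly for $n\ge p-1$, together with trace vanishing. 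Track only the non-negligible summands $T(0),\dots,T(p-2)$; their multiplicities evolve by the adjacency matrix $P$ of the path $0,\dots,p-2$. The multiplicity of $T(0)$ in $V^{\otimes 2s}$ is then $(P^{2s})_{0,0}$, the number of closed walks from $0$ staying in $\{0,\dots,p-2\}$, which is $b_s(p)$.

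The main obstacle is making the negligible-ideal step rigorous in the modular (not quantum) setting. We must show that summands $T(n)$ with $n\ge p-1$ contribute no trivial direct summands in all later tensor powers, and that nothing from them returns into the alcove. We would argue directly that for $n\ge p-1$ and any tilting $M$, the trace of every endomorphism of $T(n)\otimes M$ of the form $\kk\to T(n)\otimes M\to\kk$ vanishes. This is cleanest via the standard fact that $T(n)$ for $n\ge p-1$ is a summand of $\mathrm{St}\otimes T(m)$, and $\mathrm{St}$ is projective over the Frobenius kernel. That projectivity forces every map $\mathrm{St}\otimes T(m)\to\kk$ composed with $\kk\to\mathrm{St}\otimes T(m)$ to vanish, since otherwise $\kk$ would be a summand of a module projective over $G_1$, which $\kk$ is not.
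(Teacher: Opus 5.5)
Your argument is correct and has the same skeleton as the paper's proof. In both, $V^{\otimes 2s}$ is decomposed into tilting modules, the truncated Clebsch--Gordan rule in the alcove produces the adjacency matrix $P$, and the answer counts bounded Dyck paths. The difference is in how the two modular inputs are justified.

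The paper quotes Lemma~2.6 and Corollary~2.7 of BFRSW, which give three facts:
\begin{itemize}
\item the radical of the composition pairing is the space of negligible morphisms $\kk\to V^{\otimes 2s}$;
\item negligible morphisms between tilting modules are exactly those factoring through sums of $T_r$ with $r\geq p-1$;
\item the quotient category is semisimple with simple objects $T_0,\dots,T_{p-2}$.
\end{itemize}
Hence $\dim U(E)$ is the multiplicity of the unit object in the semisimplification.

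You instead show directly, by Krull--Schmidt and indecomposability, that the rank of the pairing equals the number of summands $T(0)$. You then use $G_1$-projectivity of $\mathrm{St}$ to show that non-alcove summands never feed back. This applies to every $T(n)\otimes M$ with $n\geq p-1$, because $T(n)$ is a summand of $\mathrm{St}\otimes T(n-p+1)$.

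Your route is more elementary and self-contained, but it relies on standard Frobenius-kernel facts. The paper's formulation, phrased as factorization of negligible morphisms through Steinberg-type summands, is reused in Proposition~\ref{prop:radgens} to obtain the explicit radical generators $\Omega_Jz_M$. Your projectivity argument computes the rank but does not by itself yield those generators.

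Two points should be tightened.
\begin{itemize}
\item Drop the ``quantum dimension'' sentence in your middle paragraph. In this modular setting the relevant quantity is ordinary dimension modulo $p$. Moreover, the claim that $\dim T(n)\equiv 0$ exactly for $n\geq p-1$ itself needs Donkin's tensor product theorem or the projectivity you invoke later.
\item State explicitly why no $T(r)$ with $r\leq p-2$ can occur in $T(n)\otimes V$ for $n\geq p-1$. For example, $\dim T(r)=r+1$ is prime to $p$, while every $G_1$-projective module has dimension divisible by $p$ because it is free over $u(\mathfrak n^-)$. Alternatively, if such a $T(r)$ occurred, then $T(0)$ would occur in $T(n)\otimes V^{\otimes(r+1)}$, contradicting your $T(0)$-statement for arbitrary tilting $M$.
\end{itemize}
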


\begin{proof}
Suppose first that $\kk$ is algebraically closed of characteristic $p$.
Let $T_r$ be the indecomposable tilting $\SL_2$-module of highest weight
$r$. Recall that a morphism $f:M\to M'$ is negligible if
$\tr(gf)=0$ for every $g:M'\to M$. The negligible tilting modules are
exactly the direct sums of $T_r$ with $r\geq p-1$, and a morphism between
tilting modules is negligible precisely when it factors through such
a module \cite[Lemma~2.6 and Corollary~2.7]{BFRSW}. The resulting quotient
category is semisimple, with simple objects the images of
$T_0,\ldots,T_{p-2}$; these modules are $\Sym^rV$ in the indicated range.
For the general semisimplification framework, see also
\cite[Sections~1--2]{BEEO}.

Set $M=V^{\otimes 2s}$, a tilting module. The pairing between
$\Hom(\kk,M)$ and $\Hom(M,\kk)$ is composition. Via the symplectic
self-duality of $V$, it is the invariant pairing on $W(E)$. Its radical
is therefore exactly the space of negligible morphisms from $\kk$ to
$M$. The dimension of $U(E)$ is the multiplicity of the unit object in
the image of $M$ in the quotient category.

For $1\leq r\leq p-2$, the Clebsch--Gordan splitting gives
\begin{equation}\label{eq:cg}
 V\otimes\Sym^rV\simeq\Sym^{r+1}V\oplus\Sym^{r-1}V.
\end{equation}
The usual multiplication and polarization maps split this decomposition
after division by $r+1$, which is invertible throughout this range.
At $r=p-2$, the first summand is $T_{p-1}$ and becomes zero in the
quotient. At $r=0$, tensoring by $V$ gives only $T_1$.
Thus tensoring by $V$ in the quotient has adjacency matrix $P$ on the
vertices $0,\ldots,p-2$. The multiplicity in question is
$(P^{2s})_{0,0}$, which counts the required bounded Dyck paths.

In characteristic zero, complete reducibility makes the invariant
pairing nondegenerate. The unrestricted Clebsch--Gordan rule counts all
Dyck paths and gives $\Cat_s$. The conclusions descend from the
algebraic closure by invariance of matrix rank under field extension.
\end{proof}

\section{Explicit generators of the radical}
\label{sec:generators}

Throughout this section $\operatorname{char}\kk=p>2$ and $m=p-1$.
Fix the ordered tensor basis of $V^{\otimes 2m}$ and define
\begin{equation}\label{eq:omega}
 \Omega_m=\sum_{\substack{w\in\{a,b\}^{2m}\\
                     \#a(w)=\#b(w)=m}} w.
\end{equation}
The tensor is symmetric under every permutation of its factors.

\begin{lemma}[Steinberg tensor]\label{lem:steinberg}
The tensor $\Omega_m$ is a nonzero invariant tensor. It is a nonzero
scalar multiple of the tensor obtained by bending the input factors of
the symmetrizer
\[
 e_m=\frac{1}{m!}\sum_{\sigma\in\mathfrak S_m}\sigma
       \ \in\End_{\SL_2}(V^{\otimes m}).
\]
It is negligible as a morphism $\kk\to V^{\otimes 2m}$.
Contraction of any two of its factors by $\eps$ is zero.
\end{lemma}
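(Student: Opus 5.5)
We prove the four assertions in order: invariance and nonvanishing, the identification with the bent symmetrizer, the vanishing of contractions, and negligibility.

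\emph{Invariance and nonvanishing.} $\Omega_m$ is nonzero because it is a sum of distinct basis words with coefficient $1$. Since $\SL_2$ is generated by unipotent elements over an algebraically closed field, it suffices to check invariance under the torus, under $x_t:a\mapsto a,\ b\mapsto b+ta$, and under its transpose $y_t$. Every word in $\Omega_m$ has weight zero, so the torus acts trivially. Applying $x_t$ and collecting the coefficient of a word with $m+k$ letters $a$, we find it equals $t^k\binom{m+k}{k}$, because each such word arises by choosing which $k$ of its $a$'s came from $b$'s. For $1\leq k\leq m=p-1$ we have $p\leq m+k\leq 2p-2$ and $k<p$. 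By Lucas' theorem, $\binom{m+k}{k}\equiv\binom{1}{0}\binom{k-1}{k}=0 \pmod p$. Hence $x_t\Omega_m=\Omega_m$. The transposed unipotent is handled by the same argument with $a$ and $b$ exchanged, so $\Omega_m$ is invariant. This Lucas computation is the main point where characteristic $p$ enters.

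\emph{Identification with the symmetrizer.} Since $m<p$, $e_m$ is a well-defined idempotent. Bending its inputs with $\eps$ turns the identity map into $\sum_w w^\vee\otimes w$, where $w^\vee$ is the $\eps$-dual word ($a^\vee=b$ and $b^\vee=-a$ up to a fixed sign). Symmetrizing then produces $\sum_{w,w'}c\,w^\vee\otimes w'$, summed over pairs with $w'$ a permutation of $w$. Write $j$ for the number of $a$'s in $w$. The letter $a$ in $w$ becomes $b$ in $w^\vee$, so $w^\vee\otimes w'$ has $(m-j)+j=m$ letters $a$. Up to signs depending only on $j$, this sum is the total $b$-count-$m$ sum restricted to the block decomposition. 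It remains to show that the coefficients are constant, equal to a nonzero scalar times those of $\Omega_m$. I would do this by comparing the restriction of $\Omega_m$ to first-half/second-half word pairs with the bent symmetrizer. Each weight block is a one-dimensional invariant line, and $\Hom(\kk,V^{\otimes m}\otimes V^{\otimes m})$ restricted to $\mathfrak S_m\times\mathfrak S_m$-symmetric tensors is one-dimensional via $\End_{\SL_2}(\Sym^m)$, or more concretely via the $T_m$ component. Hence the two invariants are proportional, and since both are nonzero the scalar is nonzero. This is the step I expect to be most delicate, because the signs from $\eps$ must cancel. If the one-dimensionality argument becomes murky, I would instead verify the proportionality directly on coefficients, which is a sign bookkeeping exercise.

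\emph{Contractions and negligibility.} By full symmetry it suffices to contract the first two factors. Applying $\eps$ to the first two letters keeps only words beginning $ab$ or $ba$, which contribute $+1$ and $-1$. The remaining words range over the same set of length $2m-2$ words, so the two terms cancel and the contraction is zero. For negligibility, the bent form factors through $e_mV^{\otimes m}\simeq\Sym^m V$. For $m=p-1$ this is the Steinberg module $T_{p-1}$, which is negligible by the result of BFRSW cited in the proof of Proposition~\ref{prop:rank}. Therefore $\Omega_m$, as a morphism $\kk\to V^{\otimes 2m}$, factors through $T_{p-1}$ and is negligible.
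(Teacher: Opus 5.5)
Your proposal is correct in substance but takes a different route from the paper for invariance and for the identification with the bent symmetrizer.

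\textbf{The paper's route.} The paper does a single explicit computation. A balanced word with $k$ letters $a$ in the first half receives coefficient $(-1)^{m-k}/\binom{m}{k}$ in the bent symmetrizer, up to a global sign. The congruence $\binom{p-1}{k}\equiv(-1)^k$ makes this coefficient constant. So proportionality to $\Omega_m$ and nonvanishing come out together, and invariance is inherited from the equivariance of $e_m$. Contractions are handled by swap symmetry together with the invertibility of $2$.

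\textbf{Your route.} You check invariance directly on the root subgroups, using $\binom{p-1+k}{k}\equiv0$ for $1\le k\le p-1$. This is self-contained and shows concretely why $m=p-1$ is special. You then obtain proportionality by a Schur-type dimension count instead of sign bookkeeping. Your contraction argument, cancelling the words beginning $ab$ against those beginning $ba$, is also valid and does not even need $2$ to be invertible.

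\textbf{What each buys.} The paper needs nothing beyond one binomial congruence. Your route needs an extra representation-theoretic input. The space of $\SL_2$-invariants in $e_mV^{\otimes m}\otimes e_mV^{\otimes m}$ is one-dimensional because it is isomorphic to $\End_{\SL_2}(\Sym^{p-1}V)$, and $\Sym^{p-1}V$ is the \emph{simple}, self-dual Steinberg module. You should state this explicitly, since indecomposability of $T_m$ alone would not give a one-dimensional endomorphism ring. You should also drop the phrase about weight blocks being ``invariant lines'', which is not correct as written.

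\textbf{A slip in the negligibility step.} As written, you conclude that $\Omega_m$, viewed as a morphism $\kk\to V^{\otimes 2m}$, factors through $T_{p-1}$. That cannot hold for a nonzero morphism, because $T_{p-1}$ is a nontrivial simple module and so $\Hom_{\SL_2}(\kk,T_{p-1})=0$. What is true is the following. The bent tensor is $(1\otimes e_m)$ applied to the coevaluation, so it factors through $V^{\otimes m}\otimes T_{p-1}$. That object is negligible because negligible morphisms form a tensor ideal. Equivalently, as the paper says, $e_m$ is negligible and bending preserves negligibility. With these two repairs your argument is complete.
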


\begin{proof}
Since $m!$ is invertible, $e_m$ is the projector onto
$\Sym^mV=T_m$, the Steinberg module. Bend its input factors using
$V\simeq V^*$. Up to a fixed overall sign, the coefficient of a
balanced word having $k$ letters $a$ in the first half is
\[
 \frac{(-1)^{m-k}}{\binom{m}{k}}.
\]
Indeed, among the $m!$ permutations in the symmetrizer, exactly
$k!(m-k)!$ produce a given arrangement with that distribution of
letters; the symplectic identification supplies the displayed sign.
The congruence
$\binom{p-1}{k}=(-1)^k$ in $\kk$ shows that all balanced words have the
same nonzero coefficient. Unbalanced words have coefficient zero.
The bent tensor is therefore a nonzero multiple of $\Omega_m$.

The identity of $T_m$ is negligible, so $e_m$ and its bent tensor are
negligible as well. Finally, interchanging any selected pair of factors
fixes $\Omega_m$ and reverses the sign of contraction by the alternating
form $\eps$. The contraction is its own negative and hence is zero.
\end{proof}

For $J\subseteq E$ of size $2m$, write $\Omega_J$ for the copy of
$\Omega_m$ on the ordered positions of $J$. Products with matching
tensors on $E\setminus J$ mean tensor products inserted in their specified
positions; an exterior reordering can change an overall sign only.

\begin{proposition}[The entire invariant radical]\label{prop:radgens}
For every ordered set $E$ of size $2s$,
\begin{equation}\label{eq:radspan}
 \mathcal R(E)=
 \operatorname{span}_\kk
 \set{\Omega_J z_M:
       J\subseteq E,\ |J|=2m,\ M\text{ a matching of }E\setminus J}.
\end{equation}
In particular, $\mathcal R(E)=0$ if $s<m$, and it is one-dimensional
if $s=m$.
\end{proposition}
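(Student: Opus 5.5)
We prove the two inclusions separately. The inclusion $\supseteq$ comes from the tensor-ideal property of negligible morphisms; the inclusion $\subseteq$ comes from a dimension count in the semisimplified category.

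\emph{The easy inclusion.} Each $\Omega_Jz_M$ is the tensor product $\Omega_J\otimes z_M$, with its factors reordered into the positions of $E$. As a morphism $\kk\to V^{\otimes 2s}$ it is the composite of the negligible morphism $\Omega_m:\kk\to V^{\otimes 2m}$ of Lemma~\ref{lem:steinberg}, tensored with the invariant $z_M$, followed by a permutation of factors. Negligible morphisms form a tensor ideal: they are exactly those factoring through sums of $T_r$ with $r\ge p-1$, and such factorizations survive tensoring with any morphism and composing with permutations. So every $\Omega_Jz_M$ is negligible. By the proof of Proposition~\ref{prop:rank}, it therefore lies in $\mathcal R(E)$. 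Each such element is also invariant, hence lies in $W(E)$ by \eqref{eq:invariants}.

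\emph{The reverse inclusion.} Let $\mathcal S(E)$ denote the span on the right of \eqref{eq:radspan}. It suffices to show $\dim W(E)/\mathcal S(E)\le b_s(p)$, since Proposition~\ref{prop:rank} gives $\dim W(E)/\mathcal R(E)=b_s(p)$. We argue by induction on $s$, using the straightening basis of Proposition~\ref{prop:normal}.

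Noncrossing matchings of $E$ correspond to Dyck words, and the words of height at most $p-2$ number $b_s(p)$. So we claim that modulo $\mathcal S(E)$, every $z_M$ with $M$ noncrossing of height at least $m=p-1$ reduces to a combination of smaller noncrossing matchings, for a suitable order such as the leading-word order from Proposition~\ref{prop:normal}.

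A Dyck word reaching height $m$ contains $m$ nested arcs, say $i_1<\dots<i_m<j_m<\dots<j_1$, with all vertices between consecutive openers and closers matched among themselves. First, use a factor of the form $u$-free contractions to isolate the nested block. Better, apply the relation directly on the ordered positions $J=\{i_1,\dots,i_m,j_m,\dots,j_1\}$: expand $\Omega_J$ in the matching basis of $W(J)$. Since $\Omega_J\in W(J)$ and $\mathcal R(J)$ is spanned by it, this expansion should have a nonzero coefficient on the fully nested matching. Then $\Omega_Jz_{M'}$, with $M'$ the rest of $M$, equals $\pm z_M$ plus a combination of $z_{N'}z_{M'}$ for other matchings $N'$ of $J$. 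After straightening these crossing-free on $E$, they have strictly smaller leading Dyck words (lower height), so the induction closes. The count of the surviving words then gives the bound.

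\emph{Main obstacle.} The hardest step is showing that the coefficient of the fully nested matching in $\Omega_m$ is a unit in $\kk$, and that re-straightening the other terms together with $M'$ really decreases the leading word. For the coefficient, I would compute the leading word: in lex order with $a>b$, the nested matching has leading word $a^mb^m$. Since $\Omega_m$ has coefficient $1$ on every balanced word, triangularity against the Dyck-word basis should force the coefficient $\pm1$ on the top matching. If this ordering argument fails, the fallback is a rank comparison: $\dim\mathcal S(E)\ge\Cat_s-b_s(p)$ via explicit leading words.

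Finally, the special cases follow from the main statement. If $s<m$, no $J$ of size $2m$ exists, so $\mathcal R(E)=0$. If $s=m$, the only choice is $J=E$, so $\mathcal R(E)$ is spanned by $\Omega_E\neq0$ and is one-dimensional, consistent with $\Cat_m-b_m(p)=1$.
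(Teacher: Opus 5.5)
Your proposal is correct, but it proves the hard inclusion $\mathcal R(E)\subseteq\mathcal S(E)$ by a different route from the paper's.

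\emph{The paper's route.} The argument is structural. A negligible morphism factors through modules $T_r$ with $r\ge m$. Each such $T_r$ is a summand of $T_m\otimes V^{\otimes(r-m)}$, so the morphism factors through $e_m\otimes 1$. Hashimoto's first fundamental theorem turns the two outer maps into pairing diagrams. Since $\varepsilon$-contractions of $\Omega_m$ vanish, only terms $\Omega_Jz_M$ survive.

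\emph{Your route.} You take the exact value $\dim W(E)/\mathcal R(E)=b_s(p)$ from Proposition~\ref{prop:rank} as input and reduce the rest to leading-word straightening on top of Proposition~\ref{prop:normal}. A noncrossing $M$ reaching height $m$ has $m$ simultaneously open arcs, which are therefore nested. The fully nested matching is the only noncrossing matching of $J$ whose product contains the word $a^mb^m$, so its coefficient in $\Omega_J$ is $\pm1$. Then $\Omega_Jz_{M'}$ rewrites $z_M$ modulo $\mathcal S(E)$ in terms of smaller leading words.

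\emph{What each buys.} Your route avoids the factorization through the Steinberg module and the diagram calculus for morphisms. It also gives more explicit information: choosing one nested block per high matching yields a triangular basis of $\mathcal R(E)$, and the noncrossing matchings of height at most $p-2$ map to a basis of $U(E)$. The cost is dependence on the semisimplification count. Straightening alone gives only $\dim W(E)/\mathcal S(E)\le b_s(p)$, whereas the paper identifies the radical without knowing its dimension.

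\emph{Repairs to the write-up.}
The induction runs over leading words in lexicographic order, not over $s$.

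The new terms need not have lower height. For $p=3$ and $M=\{\{1,8\},\{2,3\},\{4,5\},\{6,7\}\}$ you take $J=\{1,2,3,8\}$. The other term $\{\{1,2\},\{3,8\},\{4,5\},\{6,7\}\}$ is noncrossing and again has height $2$; only its word $abaababb$ is smaller than $aabababb$.

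The decrease you left open is immediate. The leading word of $z_{N'\cup M'}$ agrees with that of $z_M$ off $J$, and on $J$ it is a Dyck word other than $a^mb^m$, hence it is lexicographically smaller. Straightening any $z_K$ produces only noncrossing terms whose leading words are at most that of $z_K$, because the noncrossing basis elements have distinct leading words with coefficients $\pm1$.

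The stray sentence about ``$u$-free contractions'' should simply be deleted.
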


\begin{proof}
We may again extend scalars to an algebraic closure. By
\cite[Lemma~2.6 and Corollary~2.7]{BFRSW}, a negligible morphism between
tilting modules factors through a finite direct sum of modules $T_r$
with $r\geq m$. Each such $T_r$ is a direct summand of
\begin{equation}\label{eq:steinsummand}
 T_m\otimes V^{\otimes(r-m)}.
\end{equation}
Indeed, this is a tilting module of highest weight $r$, and its weight
$r$ space is one-dimensional. Its decomposition into indecomposable
tilting modules must consequently contain $T_r$ once. Inserting the
inclusion and projection of this summand shows that every negligible
morphism between tensor powers is a sum of morphisms factoring through
modules of the form \eqref{eq:steinsummand}.

Apply this observation to an invariant in $\mathcal R(E)$, regarded as
a negligible morphism $\kk\to V^{\otimes 2s}$. Choose equivariant maps
\[
 i:T_m\longrightarrow V^{\otimes m},\qquad
 \pi:V^{\otimes m}\longrightarrow T_m,\qquad
 \pi i=1_{T_m},\quad i\pi=e_m.
\]
For a summand $f=ba$ factoring through $T_m\otimes V^{\otimes q}$,
put $h=(i\otimes1)a$ and $g=b(\pi\otimes1)$. Then
$f=g(e_m\otimes1)h$, giving the factorization
\begin{equation}\label{eq:factorization}
 \kk\xrightarrow{h}V^{\otimes(m+q)}
 \xrightarrow{e_m\otimes 1}V^{\otimes(m+q)}
 \xrightarrow{g}V^{\otimes 2s},
\end{equation}
where $h,g$ are equivariant. Identifying $V$ with $V^*$, the multilinear
part of \cite[Theorem~17]{Hashimoto} expresses both maps as linear
combinations of pairing diagrams, as in \eqref{eq:invariants}.
By Lemma~\ref{lem:steinberg}, bending the $e_m$ box replaces it by a
nonzero scalar multiple of $\Omega_m$. Absorb this scalar into the
coefficient of the diagram. All remaining operations are symplectic
evaluations, coevaluations, and permutations of tensor factors.

For completeness, the resulting contractions can be reduced as follows.
Away from the distinguished tensor $\Omega_m$ and the output positions,
every strand has two ends and no branching. Successive evaluations and
coevaluations along a strand cancel by the duality identities; using
the alternating self-duality of $V$ can introduce only an overall sign.
Thus each component is a closed loop or a path whose endpoints are
legs of $\Omega_m$ or output positions. A path joining two legs of
$\Omega_m$ contracts those factors by $\eps$, up to sign, and gives
zero by Lemma~\ref{lem:steinberg}. In every surviving term all $2m$
legs therefore reach distinct output positions, forming a subset $J$.
The other output positions are paired among themselves, giving a
matching $M$ of $E\setminus J$, while closed loops contribute scalars.
The term is consequently a scalar multiple of $\Omega_Jz_M$.
This proves the inclusion from left to right.

Conversely, every tensor on the right is obtained from the bent
Steinberg projector by tensoring with invariant pairings and permuting
factors. Negligible morphisms form a tensor ideal, so this tensor is
negligible and belongs to $\mathcal R(E)$. For $s<m$ no such subset $J$
exists; for $s=m$ the right side is the line spanned by the nonzero
$\Omega_m$. Equality of the spaces descends to $\kk$.
\end{proof}

The preceding proposition describes an invariant tensor space. To obtain
generators in the commutative polynomial ring, we identify its distinguished
tensor with an actual polynomial in the differential variables.

\begin{lemma}\label{lem:detomega}
For every $U\subseteq[N]$ of size $2m$, the exterior image $\phi(F_U)$
is a nonzero scalar multiple of $\Omega_U$.
\end{lemma}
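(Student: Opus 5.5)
We compute $\phi(F_U)$ directly in the exterior algebra and compare it with $\Omega_U$ word by word. Write $U=\{u_1<\cdots<u_{2m}\}$, with row set $I=\{u_1,\dots,u_m\}$ and column set $J'=\{u_{m+1},\dots,u_{2m}\}$. Every entry of the matrix in \eqref{eq:generator} is an off-diagonal variable, since $I\cap J'=\varnothing$. Hence
\[
\phi(F_U)=\sum_{\sigma\in\mathfrak S_m}\operatorname{sgn}(\sigma)\prod_{i=1}^m z_{u_i,u_{m+\sigma(i)}}.
\]
Each product is $z_M$ for a perfect matching $M$ of $U$ pairing $I$ with $J'$. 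Therefore $\phi(F_U)$ lies in $W(U)$, in the component with one letter at each vertex of $U$. Under the ordered identification with $V^{\otimes 2m}$ from Section~\ref{sec:exterior}, we read off the coefficient of each word $w\in\{a,b\}^{2m}$.

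Expanding $z_{ij}=a_ib_j+a_jb_i$, a term of $\prod_i z_{u_i,u_{m+\sigma(i)}}$ chooses, for each $i$, whether the row vertex $u_i$ gets $a$ and its partner gets $b$, or the reverse. Such a term produces a word in which each pair $\{u_i,u_{m+\sigma(i)}\}$ carries one $a$ and one $b$. The word is therefore balanced, and the number of $a$'s among the rows equals the number of $b$'s among the columns. This already shows that unbalanced words get coefficient zero, matching $\Omega_m$.

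For a fixed balanced word $w$, let $A\subseteq I$ be the rows carrying $a$ and $B'\subseteq J'$ the columns carrying $a$, with $|A|=k$, so that $|B'|=m-k$. The contributing pairs are the permutations $\sigma$ sending $A$ onto the columns carrying $b$ and $I\setminus A$ onto $B'$. For each such $\sigma$, the exterior sign of reordering the letters into vertex order must be computed. I expect this sign bookkeeping to be the main obstacle. Writing each factor as the even element $a_xb_y$ and permuting these even blocks costs nothing. The remaining task is to sort the $2m$ individual letters into position order.

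I would compute the sign as $\operatorname{sgn}(\sigma)$ times a sign $\epsilon(w)$ that depends only on $w$, namely on which positions hold $a$. If that holds, the coefficient of $w$ is $\epsilon(w)\,k!\,(m-k)!\,\sum\operatorname{sgn}(\sigma)^2$-type terms, each of equal sign, so the coefficient becomes $\epsilon(w)\,k!\,(m-k)!$. Using $k!(m-k)!=m!/\binom{m}{k}$ and $\binom{p-1}{k}\equiv(-1)^k$, this gives $\pm m!$ for every balanced word, and $m!\neq0$.

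The real issue is to check that $\epsilon(w)(-1)^k$ is independent of $w$. I would do this by comparison with Lemma~\ref{lem:steinberg}, whose proof already shows that the bent symmetrizer has coefficients $(-1)^{m-k}/\binom mk$ up to one global sign. A cleaner alternative avoids the sign count altogether. $\phi(F_U)$ is antisymmetric-free in the right sense: it is invariant, and it is negligible, because $F_U$ is a $(p-1)$-minor that kills $D_{2m}$ by degree considerations. The latter could be confirmed through Proposition~\ref{prop:radgens}, since $\mathcal R(U)$ is one-dimensional, spanned by $\Omega_U$. So it suffices to show two things: that $\phi(F_U)\ne0$, which follows from the single word $a^mb^m$ arising only from $\sigma$ with all rows getting $a$, with coefficient $\pm m!$; and that $\phi(F_U)\in\mathcal R(U)$.

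For the latter, I would verify directly that contracting $\phi(F_U)$ against every $z_{M'}$ gives zero. By \eqref{eq:top}, this is equivalent to $F_U(\partial)D_{2m}=0$ after restricting to the vertex set $U$. By Laplace/Cauchy--Binet, $F_U(\partial)D_{2m}$ is a signed complementary minor, multiplied by the number of ways the symmetric variables are hit. That multiplicity factor is $m!$ or involves $\binom{p-1}{\cdot}$ sums which vanish mod $p$. I would try the direct sign computation first and fall back on this radical argument.
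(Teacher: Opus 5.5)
Your setup is correct, but the proposal stops where the content of the lemma lies. Everything depends on the claim you call ``the real issue'': that the term for $\sigma$ contributes to $w$ with sign $\operatorname{sgn}(\sigma)\epsilon(w)$, and that $\epsilon(w)(-1)^k$ is independent of $w$. You state this only conditionally and never prove it. The comparison with Lemma~\ref{lem:steinberg} you suggest cannot settle it, because that lemma computes the coefficients of a different tensor, the bent symmetrizer, and says nothing about the signs inside $\phi(F_U)$.

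The missing step is short. In each factor where the row vertex receives $b$, write $a_cb_r=-b_ra_c$; every factor is then (row letter)(column letter), at total cost $(-1)^{m-k}$. Moving the $m$ row letters ahead of the column letters costs $(-1)^{m(m-1)/2}$. Sorting the column letters costs $\operatorname{sgn}(\sigma)$, which cancels the determinant sign. So the coefficient of $w$ is $(-1)^{m(m-1)/2+m-k}k!\,(m-k)!$, and $k!\,(m-k)!=(-1)^km!$ makes it independent of $k$ and nonzero. This is exactly the paper's proof.

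Your fallback does not close the gap either. Nonvanishing is fine: the word with $a$ at every row has coefficient $(-1)^{m(m-1)/2}m!$, which needs only the easy case $k=m$ of the sign count. Invoking the one-dimensionality of $\mathcal R(U)$ from Proposition~\ref{prop:radgens} is also legitimate. What is never proved is $\phi(F_U)\in\mathcal R(U)$.

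\begin{itemize}
\item ``Degree considerations'' give nothing, since $\deg F_U=m<2m$.
\item Knowing that $\mathcal R(U)$ is the line through $\Omega_U$ does not put $\phi(F_U)$ on that line; that is the assertion itself.
\item The Laplace sketch is inaccurate. Each $d_{ij}$ acts as $\partial_{y_{ij}}+\partial_{y_{ji}}$ on the generic determinant, so $F_U(\partial)D_{2m}$ expands into a combination of several distinct minors $\det X_{P,U\setminus P}$ with $|P|=m$. It is not one complementary minor times a multiplicity. These minors are linearly dependent, so vanishing modulo $p$ is not visible termwise. In any case a multiplicity $m!=(p-1)!\equiv-1$ would not vanish.
\end{itemize}

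Proving that this combination vanishes is the same as pairing $\phi(F_U)$ against every $z_{M'}$, which needs the sign bookkeeping you deferred.

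If you want to avoid signs entirely, there is a valid route. $\phi(F_U)$ is $\SL_2$-invariant. Relabeling two row vertices negates $F_U$, and on the one-letter-per-vertex component it acts as minus the swap of the two tensor factors. Hence $\phi(F_U)$ is symmetric in the row positions, and likewise in the column positions. It therefore lies in $(\Sym^mV\otimes\Sym^mV)^{\SL_2}\cong\End_{\SL_2}(\Sym^mV)$, using $m<p$. This space is one-dimensional over $\overline{\kk}$ because the Steinberg module $\Sym^{p-1}V$ is simple, and $\Omega_U$ spans the same line.
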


\begin{proof}
Relabel $U$ increasingly as $1,\ldots,2m$, with halves $L$ and $R$.
Expanding the determinant and ordering all exterior letters by vertex,
the coefficient of a balanced word with $k$ letters $a$ in $L$ is
\begin{equation}\label{eq:detcoeff}
 (-1)^{m(m-1)/2+m-k}k!(m-k)!.
\end{equation}
To see this, such a word can occur only when the $k$ positions carrying
$a$ in $L$ are matched with the $k$ positions carrying $b$ in $R$.
There are $k!(m-k)!$ compatible bijections. The sign of each bijection
in the determinant cancels the corresponding reordering sign on the
$R$ positions. Moving the $L$ letters ahead of the $R$ letters contributes
$(-1)^{m(m-1)/2}$, and the reversed bracket choices contribute
$(-1)^{m-k}$. This gives \eqref{eq:detcoeff}.

In characteristic $p$, the identity
\[
 k!(m-k)!=\frac{m!}{\binom{m}{k}}=(-1)^k m!
\]
shows that \eqref{eq:detcoeff} is independent of $k$ and nonzero.
Every unbalanced word has coefficient zero. This is the required
multiple of \eqref{eq:omega}.
\end{proof}

\begin{proof}[Proof of Theorem~\ref{thm:ideal}]
The radical in vertex degree $\one_E+2\one_T$ is
$u_T\mathcal R(E)$ by \eqref{eq:radical}--\eqref{eq:apolarpieces}.
Proposition~\ref{prop:radgens} and Lemma~\ref{lem:detomega} express each
of its elements as a linear combination of
\[
 \phi(F_J)u_Tz_M,
 \qquad J\subseteq E,\quad |J|=2m,
 \quad M\text{ a matching of }E\setminus J.
\]
These are precisely products of the proposed generators with elements
of $C_N$. Conversely, each $\phi(F_J)$ lies in the radical, which is an
ideal. This proves \eqref{eq:ideal}, using
$\ker\phi=Q_N$.

In ordinary degree $k$, a nonzero radical piece requires $s\geq m$ and
$k=s+|T|$. There is therefore no radical in degrees below $m$. In degree
$m$, the only radical pieces have $s=m$ and $T=\varnothing$. Each is a
line, and the $\binom{N}{2m}$ different supports give distinct vertex
degrees. The classes of the $F_U$ form a basis in that degree and all
are necessary as generators modulo $Q_N$. If $p\geq5$, then $m>2$, so
the quadratic part of the apolar ideal is exactly $(Q_N)_2$. If $p=3$,
the additional generators also have degree two. The assertions about
minimal degrees and quadratic generation follow.

In characteristic zero Proposition~\ref{prop:rank} gives zero radical
in every vertex degree. Lemma~\ref{lem:top} then yields
$\Ann(D_N)=Q_N$.
\end{proof}

\section{Hilbert series and the first modular defects}
\label{sec:examples}

\begin{proof}[Proof of Theorem~\ref{thm:hilbert}]
For ordinary degree $k$ in \eqref{eq:apolarpieces}, choose the $2s$
endpoints of $E$, then the $k-s$ vertices of $T$ in its complement.
Proposition~\ref{prop:rank} gives
\begin{equation}\label{eq:hilbertvalues}
 \dim_\kk(A_N)_k=
 \sum_{s=0}^{\lfloor N/2\rfloor}
 \binom{N}{2s}\binom{N-2s}{k-s}b_s(p).
\end{equation}
Summing over $k$ with weight $t^k$ proves \eqref{eq:hilbert}.

At $t=1$ the sum is
$\sum_s\binom{N}{2s}2^{N-2s}(P^{2s})_{0,0}$.
Odd powers of $P$ have zero $(0,0)$ entry, so the binomial theorem gives
$((2I+P)^N)_{0,0}$. This counts length-$N$ walks in the strip
$0,\ldots,p-2$, allowing up and down steps and two colors of horizontal
steps. For unrestricted walks, the number with displacement $r$ is
\[
 [z^r](2+z+z^{-1})^N
 =[z^{N+r}](1+z)^{2N}=\binom{2N}{N+r}.
\]
Reflection at the absorbing boundaries $-1$ and $p-1$ gives the image
displacements $2pj$ and $2pj+2$, with opposite signs. Horizontal-step
colors are preserved under reflection. Summing these contributions
proves \eqref{eq:length}.

In characteristic zero, substitute $b_s(0)=\Cat_s$ into
\eqref{eq:hilbertvalues}. For $0\leq k\leq N$, each summand becomes
\[
 \frac{N!}{s!(s+1)!(k-s)!(N-k-s)!}.
\]
Factoring out $\binom{N}{k}/(k+1)$ leaves
$\binom{k+1}{s+1}\binom{N-k}{s}$. Vandermonde's identity gives
$\binom{N+1}{k}$ for the sum of these products. This is the first
formula in \eqref{eq:charzero}. Summing over $k$ gives
$\binom{2N+2}{N}/(N+1)=\Cat_{N+1}$.
\end{proof}

\begin{corollary}\label{cor:first}
The Hilbert function in characteristic $p>2$ agrees with its
characteristic-zero value for $N<2p-2$. At $N=2p-2$, its only change is
a decrease by one in degree $p-1$.
\end{corollary}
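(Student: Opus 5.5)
We compare formula \eqref{eq:hilbertvalues} with its characteristic-zero counterpart term by term. The same binomial weights $\binom{N}{2s}\binom{N-2s}{k-s}$ multiply $b_s(p)$ and $\Cat_s$. Hence
\[
 \dim_\kk(A_N)_k^{(0)}-\dim_\kk(A_N)_k^{(p)}
 =\sum_{s}\binom{N}{2s}\binom{N-2s}{k-s}\bigl(\Cat_s-b_s(p)\bigr).
\]
The whole question therefore reduces to the elementary combinatorics of $\Cat_s-b_s(p)$. This difference counts Dyck paths of length $2s$ whose height reaches at least $p-1=m$.

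First we note that such a path must rise $m$ steps and then return $m$ steps. This is impossible when $s<m$, so $b_s(p)=\Cat_s$ for $s<m$. When $s=m$, exactly one Dyck path of length $2m$ reaches height $m$, namely the path of $m$ up-steps followed by $m$ down-steps. Thus $\Cat_m-b_m(p)=1$. The same conclusion follows from Proposition~\ref{prop:rank} together with Proposition~\ref{prop:radgens}: $\mathcal R(E)=0$ for $s<m$, and $\mathcal R(E)$ is one-dimensional for $s=m$.

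For $N<2p-2=2m$, every $s$ in the sum satisfies $2s\leq N<2m$, hence $s<m$. All differences vanish, and the Hilbert functions agree.

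For $N=2m$, the only surviving index is $s=m$. There $\binom{N}{2s}=1$ and $\binom{N-2s}{k-s}=\binom{0}{k-m}$, which is nonzero only for $k=m=p-1$. The total defect is therefore exactly $1$ in degree $p-1$ and zero in all other degrees.

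No step is a genuine obstacle. The only point needing care is the count $\Cat_m-b_m(p)=1$. It can also be read off from Theorem~\ref{thm:ideal}: there is exactly $\binom{2m}{2m}=1$ generator $F_{[N]}$ modulo $Q_N$, so the radical at $N=2m$ is the line spanned by its class, which sits in degree $p-1$.
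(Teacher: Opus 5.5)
Your proposal is correct and follows the paper's proof: both rest on $b_s(p)=\Cat_s$ for $s<p-1$ and $\Cat_{p-1}-b_{p-1}(p)=1$ (the single path of $m$ up-steps followed by $m$ down-steps), then read the defect off \eqref{eq:hilbert}; you only make the final bookkeeping with $\binom{N-2s}{k-s}$ explicit. Your closing aside needs one more ingredient: having a single generator modulo $Q_N$ from Theorem~\ref{thm:ideal} does not by itself make the radical a line, and you also need the vanishing $\mathcal R(E)=0$ for $|E|<2m$ from Proposition~\ref{prop:radgens}. The main argument does not rely on that aside.
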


\begin{proof}
A Dyck path of semilength $s<p-1$ cannot reach height $p-1$.
For $s=p-1$, exactly one path does: all up steps followed by all down
steps. Therefore $b_s(p)=\Cat_s$ for $s<p-1$ and
$b_{p-1}(p)=\Cat_{p-1}-1$. The claim follows from
\eqref{eq:hilbert}.
\end{proof}

\begin{example}[Characteristic three]\label{ex:three}
For $p=3$, the allowed heights are $0$ and $1$, so $b_s(3)=1$ for all
$s$. Hence
\begin{equation}\label{eq:p3length}
 H_{A_N}(t)=\sum_s\binom{N}{2s}t^s(1+t)^{N-2s},\qquad
 \dim_\kk A_N=\frac{3^N+1}{2}.
\end{equation}
At $N=4$, the additional relation is
\[
 F_{\{1,2,3,4\}}=d_{13}d_{24}-d_{14}d_{23}.
\]
It does not belong to $Q_4$, since its exterior image is a nonzero
multiple of $\Omega_2$. Direct differentiation over the integers gives
\begin{equation}\label{eq:p3direct}
 (d_{13}d_{24}-d_{14}d_{23})D_4
       =6(x_{13}x_{24}-x_{14}x_{23}).
\end{equation}
This vanishes in characteristic three. The Hilbert function is
$(1,10,19,10,1)$, compared with $(1,10,20,10,1)$ in characteristic zero.
Thus the classical list of quadrics is incomplete in this characteristic,
although the full ideal is still generated in degree two.
\end{example}

\begin{example}[The first higher-degree generator]
For $p=5$ and $N=8$, the first additional generator is the quartic
\[
 \det(d_{ij})_{1\leq i\leq4,\ 5\leq j\leq8}.
\]
It is necessary modulo the quadratic ideal. The middle Hilbert value
decreases from $1764$ to $1763$, and the total dimension decreases from
$\Cat_9=4862$ to $4861$.
\end{example}

\begin{corollary}\label{cor:unimodal}
In every odd characteristic, the Hilbert function of $A_N$ is symmetric
and unimodal, with center $N/2$.
\end{corollary}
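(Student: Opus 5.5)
Symmetry comes from the Gorenstein property, and unimodality I would prove by injecting each graded piece of $A_N$ into the next one, coefficient by coefficient in the Hilbert series.

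\emph{Symmetry.} By \eqref{eq:apolarpieces}, $A_N$ is Artinian Gorenstein with socle in degree $N$, so $\dim(A_N)_k=\dim(A_N)_{N-k}$. Alternatively, in \eqref{eq:hilbertvalues} the substitution $k\mapsto N-k$ replaces $\binom{N-2s}{k-s}$ by $\binom{N-2s}{N-k-s}$, which is equal. Either way the Hilbert function is symmetric about $N/2$.

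\emph{Unimodality.} Write \eqref{eq:hilbert} as
\[
 H_{A_N}(t)=\sum_s \binom{N}{2s}\,b_s(p)\,t^s(1+t)^{N-2s}.
\]
Each summand is a nonnegative multiple of the polynomial $t^s(1+t)^{N-2s}$. Its coefficients are $\binom{N-2s}{k-s}$ for $s\le k\le N-s$, so it is symmetric about the same center $s+(N-2s)/2=N/2$ and is unimodal. A sum of symmetric unimodal sequences with a common center is again symmetric and unimodal. To see this, for $k<N/2$ each summand satisfies $c_{k}\le c_{k+1}$, and summing these inequalities gives the claim.

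The only point needing care is that the summands share the same center, including the parity case where $N$ is odd. In that case every summand has its two middle coefficients equal at $k=(N-1)/2$ and $k=(N+1)/2$, and the inequality above still holds for all $k<N/2$. No further obstacle is expected, because the nonnegativity of $b_s(p)$ is immediate from its definition as a count of paths.
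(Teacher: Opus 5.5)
Your proposal is correct and follows the paper's own proof. Both arguments write \eqref{eq:hilbert} as a sum of nonnegative multiples of $t^s(1+t)^{N-2s}$, note that each of these is symmetric and unimodal about $N/2$, and sum them; your additional remarks (symmetry via the Gorenstein property, the odd-$N$ parity check, and the word ``injecting'', which really means only a numerical comparison of coefficients) do not change the argument.
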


\begin{proof}
Every polynomial $t^s(1+t)^{N-2s}$ in \eqref{eq:hilbert} has a symmetric
unimodal coefficient sequence with center $N/2$, and its coefficient
$\binom{N}{2s}b_s(p)$ is nonnegative.
\end{proof}

\section{Multiplication by a linear form}
\label{sec:lefschetz}

We first reduce the calculation to the diagonal sum $\ell$ and then
describe its action explicitly. A symmetric matrix $H=(h_{ij})$ defines
the directional differential operator
\[
 D_H=\sum_{i\leq j}h_{ij}d_{ij}.
\]
Thus $D_Hf(X)$ is the coefficient of $t$ in $f(X+tH)$. This convention
specifies the off-diagonal coefficients without a trace-pairing
normalization.

\begin{lemma}\label{lem:openorbit}
Over an algebraic closure of $\kk$, all $D_H$ with $H$ nonsingular have
the same ranks of multiplication by every power in $A_N$. These ranks
are the generic ranks and are realized by $\ell=D_I$. If any linear
form over any extension of $\kk$ is weak or strong Lefschetz, then
$\ell$ has the same property.
\end{lemma}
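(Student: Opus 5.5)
The natural tool is the group action $X\mapsto gXg^{T}$. For $g\in\GL_N(\kk)$ we have $D_N(gXg^T)=\det(g)^2D_N(X)$, so the substitution $\sigma_g:f(X)\mapsto f(gXg^T)$ is an automorphism of $R$ that preserves the line $\kk D_N$. By the chain rule, this substitution intertwines differentiation with a contragredient linear substitution $\sigma_g^*$ on $S_1$. This is the change of variables dual to $X\mapsto gXg^T$, expressed in the directional-derivative convention introduced above. Concretely, one checks that
\[
 D_H\bigl(f(gXg^T)\bigr)=(D_{gHg^T}f)(gXg^T),
\]
by differentiating $f(g(X+tH)g^T)$ in $t$. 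Extending this to products shows that $f\in\Ann(D_N)$ if and only if the transformed operator annihilates $D_N$. Since $\det(g)^2$ is a nonzero scalar, the induced map is a graded automorphism $\Psi_g$ of $A_N$ with $\Psi_g(D_H)=D_{gHg^T}$, or with $g^{-1}$ in place of $g$, depending on the direction chosen. This is the first step, and it is straightforward bookkeeping.

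Second, the automorphism intertwines multiplication maps. Multiplication by $D_H^q$ on $(A_N)_k$ is conjugate via $\Psi_g$ to multiplication by $D_{gHg^T}^q$, so all their ranks agree. Over an algebraically closed field of characteristic $\neq2$, every nonsingular symmetric matrix is congruent to $I$, that is, $H=gg^T$. Hence every nonsingular $H$ gives the same ranks as $D_I$. One also needs $\ell=D_I$: with the convention $D_Hf=\frac{d}{dt}f(X+tH)$, the operator $D_I$ is $\sum_i d_{ii}$, which matches $\ell$.

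Third, I show these are the generic ranks. For fixed $k,q$, the rank of multiplication by $D_H^q$, viewed as a function of $H$, is lower semicontinuous, so its maximum is attained on a nonempty Zariski-open set of symmetric matrices. The nonsingular matrices also form a nonempty open set, and the two intersect because the space of symmetric matrices over $\bar\kk$ is irreducible. On the intersection the rank equals the common value for nonsingular $H$, so that value is the maximum. Rank is unchanged by field extension, so any Lefschetz form over any extension $\kk'$ can be passed to $\bar{\kk'}$. There its ranks are at most the generic ranks, which are attained by $\ell$. Maximal rank for that form therefore forces maximal rank for $\ell$. This holds for every $k$ and $q$ simultaneously in the strong case, and for $q=1$ in the weak case. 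Since $\ell$ is defined over the prime field and its ranks are unchanged by scalar extension, $\ell$ has the property over $\kk$ itself.

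The main obstacle I expect is the careful verification in the first step. I need to confirm that the chain rule, in the directional convention without trace normalization, really sends $\Ann(D_N)$ to itself, including at off-diagonal coefficients, where a factor of $2$ could enter. Working with $D_H$ as a $t$-derivative sidesteps this, and the factor $2$ is invertible anyway. The remaining steps are standard semicontinuity arguments.
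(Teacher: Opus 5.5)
Your proposal is correct and follows essentially the paper's argument. The substitution $f\mapsto f(gXg^{\mathsf T})$ rescales $D_N$ by $\det(g)^2$ and conjugates $D_H$ to $D_{gHg^{\mathsf T}}$; congruence is transitive on nonsingular symmetric matrices over $\bar\kk$; and lower semicontinuity of rank together with density of the nonsingular locus shows that $\ell=D_I$ realizes the generic ranks. The only difference is cosmetic: you bound each form's rank by that of $\ell$ one degree pair at a time, whereas the paper intersects the finitely many maximal-rank open sets with the dense orbit. Both versions are valid.
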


\begin{proof}
Let $T_gf(X)=f(gXg^{\mathsf T})$. Since $T_gD_N=\det(g)^2D_N$,
conjugation $f(\partial)\mapsto T_g^{-1}f(\partial)T_g$ preserves the
apolar ideal. The chain rule gives
$T_g^{-1}D_HT_g=D_{gHg^{\mathsf T}}$.
Over an algebraically closed field of characteristic different from
two, the nonsingular symmetric matrices form a single congruence orbit,
containing $I$.

For each degree and each power, the maximal-rank locus is open, since
the relevant matrix entries are polynomial functions of the coefficients
of the linear form. The nonsingular orbit is open and dense, so its
constant rank is the generic rank. There are only finitely many maps
needed for either Lefschetz property. If their common maximal-rank
locus is nonempty, it meets that orbit. Field extension preserves ranks,
and $\ell$ is defined over $\kk$.
\end{proof}

For a set $J$, let
\[
 B(J)=\kk[y_i:i\in J]/(y_i^2:i\in J),
 \qquad L_J=\sum_{i\in J}y_i.
\]
When $|J|=M$, we also write $B_M$ for this algebra. Grading shifts are
written so that a vector in degree $j$ of $B(J)(-s)$ has total degree
$s+j$.

\begin{proposition}[Boolean decomposition]\label{prop:boolean}
There is an isomorphism of graded $\kk[\ell]$-modules
\begin{equation}\label{eq:boolean}
 A_N\simeq
 \bigoplus_{\substack{E\subseteq[N]\\|E|=2s}}
       U(E)\otimes_\kk B([N]\setminus E)(-s),
\end{equation}
where $U(E)$ is placed in degree zero before the shift, and $\ell$
acts on the summand indexed by $E$ as $1\otimes L_{[N]\setminus E}$.
\end{proposition}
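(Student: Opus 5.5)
The plan is to work entirely inside the exterior model of Section~\ref{sec:exterior}, where $A_N$ becomes an explicit quotient of $C_N=\phi(S)\subseteq\Lambda$. By \eqref{eq:radical}, $A_N\simeq C_N/\operatorname{Rad}$, where $\operatorname{Rad}$ is the radical of the $\tau$-pairing. Both $C_N$ and this radical are graded by vertex degree, and the radical's component in vertex degree $\one_E+2\one_T$ is $u_T\mathcal R(E)$. Together with Proposition~\ref{prop:normal}, this gives a vector-space decomposition
\[
 A_N=\bigoplus_{E}\ \bigoplus_{T\subseteq[N]\setminus E} u_T W(E)/u_T\mathcal R(E).
\]
Here $E$ runs over even subsets of $[N]$, and the sum is direct because distinct pairs $(E,T)$ have distinct vertex degrees. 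For fixed $E$, I then define the map
\[
 \Psi_E:U(E)\otimes B([N]\setminus E)\to \bigoplus_T u_TW(E)/u_T\mathcal R(E),\qquad
 \bar w\otimes y^T\mapsto \overline{u_Tw},
\]
where $y^T=\prod_{i\in T}y_i$.

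First I check that $\Psi_E$ is a well-defined linear isomorphism in each summand. Multiplication by $u_T$ is injective on $W(E)$ when $T\cap E=\varnothing$. This holds because $u_T$ involves only the letters $a_i,b_i$ at vertices disjoint from $E$, so $w\mapsto u_Tw$ is injective in the exterior algebra; this is the same fact used at the end of the proof of Proposition~\ref{prop:normal}. Hence $w\mapsto u_Tw$ carries $W(E)$ isomorphically onto $u_TW(E)$ and $\mathcal R(E)$ onto $u_T\mathcal R(E)$. It therefore induces $U(E)\simeq u_TW(E)/u_T\mathcal R(E)$, which is exactly \eqref{eq:apolarpieces}. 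For degrees, $u_Tw$ with $w\in W(E)$ has ordinary degree $|T|+s$, because $w$ is a product of $s$ factors $z_{ij}$. This matches degree $|T|$ in $B([N]\setminus E)$ shifted by $s$.

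Next I verify that $\Psi_E$ intertwines $\ell$ with $1\otimes L_{[N]\setminus E}$. Since $\phi(\ell)=\sum_i u_i$, I compute $u_i\cdot u_Tw$ in $\Lambda$ in three cases. If $i\in T$, the product is zero because $u_i^2=0$. If $i\in E$, it is also zero: $w$ already contains exactly one of $a_i,b_i$ in every monomial, and $u_i=a_ib_i$ then repeats a letter. If $i\notin E\cup T$, we get $u_{T\cup\{i\}}w$, up to no sign since the $u$'s are even and commute. On the Boolean side, $y_iy^T$ is $0$ for $i\in T$ (as $y_i^2=0$), equals $y^{T\cup\{i\}}$ for $i\notin T$, and $y_i$ does not exist for $i\in E$. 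Thus $\Psi_E(\bar w\otimes L\,y^T)=\ell\cdot\Psi_E(\bar w\otimes y^T)$. Because $\operatorname{Rad}$ is an ideal, the action on the quotient is well defined, which is consistent with $u_i\,u_T\mathcal R(E)\subseteq u_{T\cup\{i\}}\mathcal R(E)$. Summing over $E$ gives the isomorphism \eqref{eq:boolean} of graded $\kk[\ell]$-modules.

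The only delicate point is this case analysis for the action of $u_i$, specifically the claim that $u_i$ kills $W(E)$ for $i\in E$ and that no cross-terms between different $E$ arise. Both follow from vertex degrees. Multiplication by $u_i$ raises the vertex degree by $2e_i$, so it preserves $E$ and only enlarges $T$. A vertex of $E$ would reach degree $3$, which vanishes in $C_N$ by Proposition~\ref{prop:normal}, or directly in $\Lambda$.
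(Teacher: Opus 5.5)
Your proposal is correct and follows the same route as the paper: group the pieces \eqref{eq:apolarpieces} by the endpoint set $E$, identify $u_TU(E)$ with $U(E)\otimes y^T$, and check that $u_i$ kills a piece when $i\in E\cup T$ and otherwise adjoins $i$ to $T$, which is exactly the action of $y_i$. You also make explicit two points the paper leaves implicit: the radical is vertex-graded with components $u_T\mathcal R(E)$, and multiplication by $u_T$ is injective on $W(E)$ when $T\cap E=\varnothing$.
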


\begin{proof}
Group \eqref{eq:apolarpieces} by its endpoint set $E$. The remaining
vertex degrees are $0$ or $2$, indexed by subsets $T$ of the complement
of $E$. Multiplication by $u_i$ adds $i$ to $T$ if it is not already
present, and otherwise gives zero. It also gives zero for $i\in E$,
because the vertex degree would become three. This is exactly the
asserted action of the square-zero variables $y_i$. Each bracket
matching on $E$ has ordinary degree $s$, giving the shift.
\end{proof}

The decomposition is a statement about a graded module and its linear
operator. It is not an algebra decomposition. All of its summands have
their Hilbert-function center at the same total degree $N/2$.

Define $r_M(j,q)$ to be the rank of multiplication by
$(y_1+\cdots+y_M)^q$ from $(B_M)_j$ to $(B_M)_{j+q}$, and set it equal
to zero for degrees outside $0\leq j\leq j+q\leq M$.

\begin{proposition}[Ranks of all powers]\label{prop:powers}
Suppose $\operatorname{char}\kk=p>2$. For valid degrees,
$r_M(j,q)=0$ if $q\geq p$. If $0\leq q<p$, set
$u=\min(j,M-j-q)$. Then
\begin{equation}\label{eq:inclusionrank}
 r_M(j,q)=
 \sum_{\substack{0\leq i\leq u\\
              p\nmid\binom{u+q-i}{u-i}}}
       \left(\binom{M}{i}-\binom{M}{i-1}\right).
\end{equation}
For every $k,q\geq0$, the generic rank in $A_N$ is
\begin{equation}\label{eq:allranks}
 \rank\bigl(\ell^q:(A_N)_k\to(A_N)_{k+q}\bigr)
 =\sum_{s=0}^{\lfloor N/2\rfloor}
      \binom{N}{2s}b_s(p)r_{N-2s}(k-s,q).
\end{equation}
\end{proposition}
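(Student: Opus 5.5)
The plan is to reduce everything to subset-inclusion matrices in the Boolean algebra $B_M$, and then sum over the Boolean decomposition of Proposition~\ref{prop:boolean}.

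\textbf{Step 1: the power $L^q$.} Write $L=y_1+\cdots+y_M$. Since $y_i^2=0$ and the $y_i$ commute, the multinomial expansion gives
\[
 L^q=q!\sum_{|T|=q}y_T .
\]
Hence, for a subset $S$ of size $j$,
\[
 L^q y_S=q!\sum_{\substack{|T|=q\\ T\cap S=\varnothing}}y_{S\cup T}.
\]
In the monomial bases of $(B_M)_j$ and $(B_M)_{j+q}$, multiplication by $L^q$ is therefore $q!$ times the inclusion matrix $W_{j,j+q}(M)$. Its rows are indexed by $(j+q)$-subsets, its columns by $j$-subsets, and an entry is $1$ exactly when the column set is contained in the row set.

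If $q\geq p$, then $q!=0$ in $\kk$, so $r_M(j,q)=0$. If $q<p$, then $q!$ is a unit, so $r_M(j,q)$ equals the rank of $W_{j,j+q}(M)$ over $\kk$. Since the entries are $0$ and $1$, this is the rank over $\mathbb F_p$.

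\textbf{Step 2: Wilson's formula.} I would apply Wilson's formula as proved by Frankl \cite[Theorem~4.1]{Frankl}. For $t\leq\min(k,v-k)$ it gives
\[
 \rank_p W_{t,k}(v)=\sum_{\substack{0\le i\le t\\ p\nmid\binom{k-i}{t-i}}}\left(\binom{v}{i}-\binom{v}{i-1}\right).
\]
Directly, this covers the case $j\leq M-j-q$, taking $t=j$, $k=j+q$ and $v=M$.

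In the other case, complementation $S\mapsto[M]\setminus S$ identifies the transpose of $W_{j,j+q}(M)$ with $W_{M-j-q,\,M-j}(M)$. The new matrix has the same gap $q$ and smaller index $M-j-q\leq j$. So in both cases the rank is given with $t=u=\min(j,M-j-q)$ and $k=u+q$.

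The binomial $\binom{k-i}{t-i}$ then becomes $\binom{u+q-i}{u-i}$, which is \eqref{eq:inclusionrank}. I expect the only real care needed here is checking that the hypothesis of Frankl's theorem, namely $t\leq\min(k,v-k)$, holds after this symmetrization, and that the conventions (which set is $t$, and the binomial convention for $i-1<0$) match the statement.

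\textbf{Step 3: assembling the ranks in $A_N$.} By Lemma~\ref{lem:openorbit}, the generic rank of each power is realized by $\ell$. By Proposition~\ref{prop:boolean}, $\ell^q$ acts on $A_N$ block-diagonally, as $1\otimes L_{[N]\setminus E}^q$ on the summand $U(E)\otimes B([N]\setminus E)(-s)$.

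On that summand, total degree $k$ corresponds to degree $k-s$ in $B_{N-2s}$. Its contribution to the rank is therefore $\dim U(E)\cdot r_{N-2s}(k-s,q)$, where $r$ is taken to be zero outside the valid degrees. By Proposition~\ref{prop:rank}, $\dim U(E)=b_s(p)$. There are $\binom{N}{2s}$ choices of $E$ with $|E|=2s$. Summing over $s$ gives \eqref{eq:allranks}.
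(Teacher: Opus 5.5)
Your proposal is correct and follows the paper's proof essentially step for step: you write $L^q$ as $q!$ times the inclusion matrix, complement to $u=\min(j,M-j-q)$ so that $M\ge 2u+q$ and Wilson's formula \cite[Theorem~4.1]{Frankl} applies, and then sum over the Boolean decomposition using $\dim U(E)=b_s(p)$ and Lemma~\ref{lem:openorbit}. The only cosmetic difference is that you dispose of $q\ge p$ via $q!=0$ in $\kk$, whereas the paper uses $L^p=\sum_i y_i^p=0$.
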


\begin{proof}
In the squarefree monomial bases of $B_M$, multiplication by
$(y_1+\cdots+y_M)^q$ is $q!$ times the inclusion matrix from $j$-subsets
to $(j+q)$-subsets. If $q\geq p$, the power itself is zero, since
$(y_1+\cdots+y_M)^p=\sum_i y_i^p=0$.

For $q<p$, the scalar $q!$ is a unit. Complementation of subsets
transposes the inclusion matrix when needed and replaces $j$ by
$u=\min(j,M-j-q)$. Now $M\geq 2u+q$, so Wilson's rank formula
\cite[Theorem~4.1]{Frankl}, with $a=u+q$ and $b=u$, applies and gives
\eqref{eq:inclusionrank}. Formula~\eqref{eq:allranks} follows from
Proposition~\ref{prop:boolean}, Proposition~\ref{prop:rank}, and the
$\binom{N}{2s}$ choices of $E$. Lemma~\ref{lem:openorbit} identifies
these with the generic ranks.
\end{proof}

\begin{proof}[Proof of Theorem~\ref{thm:lefschetz}]
Suppose first that $N\leq2p-2$. For any Boolean summand with $M\leq N$
and $j<M/2$, one has $j+1<p$. In the $q=1$ case of
\eqref{eq:inclusionrank}, $u=j$ and the divisibility factors are
$j+1-i$ for $0\leq i\leq j$. All are nonzero modulo $p$, so the sum
telescopes to $\binom{M}{j}$. Multiplication is injective on the
increasing half of the Boolean algebra. The perfect pairing between
complementary subsets gives surjectivity on the decreasing half.
Because all shifted Boolean summands in \eqref{eq:boolean} have center
$N/2$, their direct sum has maximal rank in every degree. Thus $\ell$
is weak Lefschetz.

If $N\geq2p-1$, consider the summand with $E=\varnothing$, namely
$B_N$. In it the element
\[
 \ell^{p-1}=(p-1)!\sum_{|T|=p-1}u_T
\]
is nonzero, but is killed by $\ell$. Its degree satisfies
$p-1<N/2$, and Corollary~\ref{cor:unimodal} gives
$\dim(A_N)_{p-1}\leq\dim(A_N)_p$. Maximal rank at this degree would
require injectivity, so $\ell$ fails the weak Lefschetz property.
Lemma~\ref{lem:openorbit} then rules out every other linear form.
At $N=2p-1$, formula~\eqref{eq:inclusionrank} shows that the
endpoint-free summand loses exactly the $i=0$ contribution, of size
one. Every other summand has $M\leq2p-3$ and its middle map has full
rank. The asserted kernel dimension follows.

For strong Lefschetz, if $N<p$, every Boolean summand has $M<p$.
For valid degrees in \eqref{eq:inclusionrank}, the top indices of the
binomial coefficients are less than $p$, so none is divisible by $p$;
also $q!$ is a unit. All powers have maximal rank. More explicitly,
the rank is $\binom{M}{u}$, the smaller of the two dimensions.
For each fixed pair of total degrees $k,k+q$, the side with smaller
dimension is the same in every shifted Boolean summand, since their
centers coincide. Summing therefore preserves maximal rank.

If $N\geq p$, each $d_{ij}^{\,p}$ is zero in $A_N$: every variable in
$D_N$ has exponent at most two, strictly less than $p$. Frobenius gives
$L^p=0$ for every $L\in(A_N)_1$. But $(A_N)_N$ is nonzero, since
$d_{11}\cdots d_{NN}D_N=1$. Hence $L^N$ cannot induce the required
isomorphism $(A_N)_0\to(A_N)_N$. Strong Lefschetz fails.

In characteristic zero, the subset-inclusion maps all have maximal
rank. This follows, for example, from the same inclusion-matrix rank
theorem: for each fixed matrix take a prime larger than $M$, for which
the preceding binomial factors and factorials are units; full rank
modulo that prime implies full rank over $\mathbb Q$. The module
decomposition then proves that $\ell$ is strong Lefschetz.
\end{proof}

\paragraph{Acknowledgment.}
The author acknowledges OpenAI GPT-6 for assistance with research
exploration, proof development, verification code, and manuscript
preparation.

\end{document}